\newtheorem{theorem}{Theorem}
\newtheorem{corollary}[theorem]{Corollary}
\newtheorem{proposition}[theorem]{Proposition}
\theoremstyle{remark}
\newtheorem*{remark}{Remark}
\newtheorem*{claim}{Claim}
\theoremstyle{definition}
\numberwithin{theorem}{section} \numberwithin{equation}{section}
\numberwithin{example}{section}
\title[Ramanujan's radial limits]{Ramanujan's radial limits and mixed mock modular bilateral $q$-hypergeometric series}
\author{Eric Mortenson}
\begin{document}

\date{3 June 2014}

\subjclass[2010]{11B65, 11F27}

\keywords{mock theta functions, Appell-Lerch sums, radial limits, bilateral $q$-series}

\begin{abstract}
Using results from Ramanujan's lost notebook, Zudilin recently gave an insightful proof of a radial limit result of Folsom, Ono, and Rhoades for mock theta functions.  Here we see that the author's previous work on the dual nature of Appell--Lerch sums and partial theta functions and on constructing bilateral $q$-series with mixed mock modular behaviour is well-suited for such radial limits.  We present five more radial limit results which follow from mixed mock modular bilateral $q$-hypergeometric series.  We also obtain the mixed mock modular bilateral series for a universal mock theta function of Gordon and McIntosh.  The later bilateral series can be used to compute radial limits for many classical second, sixth, eight, and tenth order mock theta functions.
\end{abstract}


\address{Max-Planck-Institut f\"ur Mathematik, Vivatsgasse 7, 53111 Bonn, Germany}
\email{etmortenson@gmail.com}
\maketitle
\setcounter{section}{-1}

\section{Introduction}

In his last letter to Hardy, Ramanujan gave a list of seventeen functions which he called ``mock theta functions.''  Ramanujan defined a mock theta function $f(q)$ as a $q$-series, convergent  for $|q|<1$, such that for every root of unity $\zeta$, there is a theta function $\theta_{\zeta}(q)$ such that the difference $f(q)-\theta_{\zeta}(q)$ is bounded as $q\rightarrow \zeta$ radially; moreover, there is no single theta function which works for all $\zeta$.   Only recently have mock theta functions been shown to satisfy Ramanujan's mysterious definition \cite{GOR}.

Ramanujan also made slightly more specific claims about asymptotic behaviour. We give an example.
Referring the reader unfamiliar with standard notation to Section \ref{section:notation}, we recall the third order mock theta function
\begin{align}
f(q):=\sum_{n=0}^{\infty}\frac{q^{n^2}}{(-q;q)_n^2}=1+\sum_{n=1}\frac{q^{n^2}}{(1+q)^2(1+q^2)^2\cdots(1+q^n)^2}\label{equation:f-def}
\end{align}
and the theta function 
\begin{align}
b(q):&=(1-q)(1-q^3)(1-q^5)\cdots \times(1-2q+2q^4-2q^9+\cdots)\label{equation:b-def}\\
&=(q;q^2)_{\infty}\sum_{n=-\infty}^{\infty}(-1)^nq^{n^2}=J_{1,2}^2/J_1\notag
\end{align}

\begin{claim}[Ramanujan] As $q$ approaches an even root of unity of order $2k$, the difference $f(q)-(-1)^kb(q)$ is absolutely bounded.
\end{claim}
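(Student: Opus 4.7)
The plan is to follow the bilateralization philosophy highlighted in the abstract: construct a bilateral $q$-series extension of $f(q)$, identify its mixed mock modular decomposition as a theta piece plus an Appell--Lerch sum, and exploit that at the relevant roots of unity the Appell--Lerch obstruction stays bounded while the bilateral tail supplies exactly $(-1)^k b(q)$.

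First, I would introduce the bilateral companion
\begin{equation*}
F(q):=\sum_{n\in\Z}\frac{q^{n^2}}{(-q;q)_n^2},
\end{equation*}
where $(-q;q)_n$ for $n<0$ is defined by the standard $q$-Pochhammer extension $(a;q)_n=(a;q)_\infty/(aq^n;q)_\infty$. A substitution $n\mapsto -n$ in the negative-index tail, together with the elementary identity relating $(-q;q)_{-n}$ to $(-q;q)_{n-1}$ by a power of $q$ and a sign, should collapse $F(q)-f(q)$ into a reshuffled copy of $f(q)$ weighted by explicit $q$-powers, plus a residual theta piece.  Specializing $q=\zeta$ with $\zeta^{2k}=1$ then simplifies those $q$-powers and, after identifying the residual theta piece with $b(q)$ via the product representation $b(q)=(q;q^2)_\infty\sum(-1)^n q^{n^2}$, should give an identity of the shape $F(q)-f(q)\equiv(-1)^k b(q) \pmod{\text{bounded}}$ as $q\to\zeta$.

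Second, I would use a Bailey-type or Hecke-type transformation to rewrite $F(q)$ itself as a sum of a pure theta function plus an Appell--Lerch sum $\mu$, exhibiting $F(q)$ explicitly as mixed mock modular.  This is the step where the dual relationship between Appell--Lerch sums and partial theta functions from the author's earlier work enters and makes the bilateralization natural.  The standard fact that Appell--Lerch sums remain bounded at generic roots of unity (the dual partial theta presentation absorbs any singular behavior) then shows that $F(q)$ itself has a bounded radial limit at $\zeta$.  Combining this boundedness with the identity from the first step yields the claimed absolute boundedness of $f(q)-(-1)^k b(q)$.

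The principal obstacle is the second step: locating and verifying the correct bilateral Bailey/Hecke identity that realizes $F(q)$ as theta plus $\mu$.  The first step is essentially careful bookkeeping with shifted $q$-Pochhammer symbols at $2k$-th roots of unity, and the radial-limit estimate on $\mu$ is by now standard; the creative work lies in finding the right mixed mock modular bilateralization, paralleling the more general program the paper announces.
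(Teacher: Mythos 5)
Your bilateral series is the right one, and in fact it is exactly the paper's: writing $n=-m$ in the tail and using $(-q;q)_{-m}=q^{\binom{m}{2}}\big/\bigl(2(-q;q)_{m-1}\bigr)$, one finds $F(q)-f(q)=4\sum_{n\ge0}q^{n+1}(-q;q)_n^2=4U(-1;q)$, so $F(q)=R(-1;q)+4U(-1;q)$ is precisely the specialization $\omega=-1$ of \eqref{equation:ram-bilat-id1}, which comes from Proposition \ref{proposition:bilateral-mxqz-prop}. But both of your key analytic claims are wrong. First, the tail is not a reshuffled copy of $f(q)$ plus a theta residue and it does not supply $(-1)^kb(q)$: it is the rank generating function for strongly unimodal sequences, and at a primitive $2k$-th root of unity $\zeta$ it simply \emph{terminates} (the factor $1+\zeta^k=0$ kills every term with $n\ge k$), so it tends to the finite number $4U(-1;\zeta)$ --- indeed $-4U(-1;\zeta)$ is the exact radial limit in Theorem \ref{theorem:thm-for}. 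Second, $F(q)$ is \emph{not} bounded as $q\to\zeta$: since $b(q)$ blows up at even-order roots of unity (otherwise the Claim would be vacuous and $f$ would not be a genuine mock theta function) and $F(q)-f(q)$ stays bounded, $F(q)$ must blow up like $(-1)^kb(q)$. The blanket assertion that Appell--Lerch sums remain bounded at roots of unity is likewise false in general --- the mock theta functions themselves are Appell--Lerch sums.

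The ingredient you are missing is a \emph{second}, independent Eulerian representation of the Appell--Lerch sum on the closed-form side, namely the Lost Notebook identities \eqref{equation:RLNid2} (used for $k$ odd) and \eqref{equation:RLNid4} (used for $k$ even --- this case split is what produces the sign $(-1)^k$, which your argument never accounts for). The bilateral identity gives $f(q)+4U(-1;q)=2\tfrac{j(-1;q)}{J_1}\,m(1,q,-1)$; substituting \eqref{equation:RLNid2} with $x=1$ replaces $m(1,q,-1)$ by an explicit convergent $q$-series minus the theta correction $J_{1,2}^2/(2j(-1;q))$. That correction, multiplied by the prefactor $2j(-1;q)/J_1$, is exactly $-J_{1,2}^2/J_1=-b(q)$ --- this is where $b(q)$ enters, not from the tail --- while the remaining Eulerian piece dies in the limit because its prefactor $j(-1;q)/J_1=2(-q;q)_\infty^2$ vanishes as $q\to\zeta$ and the series itself converges there. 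Rearranging as in \eqref{equation:id1-pre}--\eqref{equation:id1-fin} gives $f(q)-(-1)^kb(q)=-4U(-1;q)+(\text{vanishing term})$, which is bounded. Without these two representations of $m(1,q,-1)$, your two steps cannot be made to close up: step one puts $b(q)$ in a place where only a bounded quantity lives, and step two asserts a boundedness that contradicts it.
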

Recently, Folsom, Ono, and Rhoades not only proved but also refined Ramanujan's claim:
\begin{theorem}\cite{FOR1,FOR2} \label{theorem:thm-for} If $\zeta$ is a primitive even order $2k$ root of unity, then, as $q$ approaches $\zeta$ radially within the unit disk, we have that
\begin{equation*}
\lim_{q\rightarrow \zeta} \Big ( f(q)-(-1)^kb(q)\Big )=-4\cdot \sum_{n=0}^{k-1}(1+\zeta)^2(1+\zeta^2)^2\cdots(1+\zeta^n)^2\zeta^{n+1}.
\end{equation*}
\end{theorem}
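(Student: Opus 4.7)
The plan is to replace the one-sided mock theta function $f(q)$ by its bilateral companion $\tilde f(q):=\sum_{n\in\Z} q^{n^2}/(-q;q)_n^2$ and to read off the radial limit from its mixed mock modular decomposition. The first step is formal: applying the identity $(-q;q)_{-n}=q^{n(n-1)/2}/\bigl[2(-q;q)_{n-1}\bigr]$ to the terms with negative index, one finds
\begin{equation*}
\sum_{n<0} \frac{q^{n^2}}{(-q;q)_n^2} \;=\; 4\sum_{n=0}^{\infty} q^{n+1}(-q;q)_n^2 \;=:\; 4\psi(q),
\end{equation*}
so that $\tilde f(q)=f(q)+4\psi(q)$, where $\psi(q)$ is a partial theta function of the sort studied in the author's earlier work on the duality between Appell--Lerch sums and partial theta.

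The second step is that $\psi(q)$ already encodes the conjectured right hand side. Because $\zeta$ is a primitive $2k$-th root of unity one has $1+\zeta^k=0$, and therefore the factor $(-q;q)_n^2$ in $\psi$ vanishes at $q=\zeta$ for every $n\geq k$. Hence
\begin{equation*}
4\psi(\zeta)=4\sum_{n=0}^{k-1}(1+\zeta)^2(1+\zeta^2)^2\cdots(1+\zeta^n)^2\,\zeta^{n+1},
\end{equation*}
which is precisely the negative of the quantity displayed in Theorem \ref{theorem:thm-for}. It therefore suffices to show that $\lim_{q\to\zeta}\bigl(\tilde f(q)-(-1)^k b(q)\bigr)=0$, i.e., that the bilateral series agrees with $(-1)^k b(q)$ radially at $\zeta$.

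The third and decisive step is to establish a bilateral $q$-hypergeometric identity of mixed mock modular type expressing $\tilde f(q)$ as a genuinely modular object. Following the dual Appell--Lerch / partial-theta framework, I would rewrite $\tilde f(q)$ as an Appell--Lerch sum $\mu$ plus an explicit theta piece, and then use the known transformation of $\mu$ under $q\mapsto\zeta$ together with the factorization $b(q)=J_{1,2}^2/J_1$ to pin down $\tilde f(q)/b(q)\to (-1)^k$ along the radial path. The sign $(-1)^k$ should emerge from the central symmetry of the finite theta character $\sum_{|n|<k}(-1)^n\zeta^{n^2}$, which picks up the parity of $k$. This last identification is the analytic heart of the argument and, I expect, the main obstacle: it is the point at which Zudilin invokes a Ramanujan lost-notebook identity, and in the present framework it should follow from a bilateral $q$-series identity in the spirit of the author's prior constructions. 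Once this modular identification is in hand, subtracting $4\psi(q)$ from both sides and taking the radial limit gives Theorem \ref{theorem:thm-for} immediately.
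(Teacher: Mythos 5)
Your first two steps are correct and coincide exactly with the paper's setup: your $\psi(q)$ is the paper's $U(-1;q)$, your bilateral series $\tilde f(q)$ is $R(-1;q)+4U(-1;q)$, the inversion $(-q;q)_{-n}=q^{n(n-1)/2}/[2(-q;q)_{n-1}]$ is right, and the tail does terminate at $n=k-1$ because $(1+\zeta^k)=0$, producing the negative of the stated right-hand side. So the reduction to showing $\lim_{q\to\zeta}\bigl(\tilde f(q)-(-1)^k b(q)\bigr)=0$ is sound.

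The gap is that this last step is the entire content of the theorem, and you leave it as a plan while also misdescribing how it actually closes. Two concrete inputs are needed. First, the Lost Notebook bilateral summation (Proposition \ref{proposition:bilateral-mxqz-prop}, i.e.\ (\ref{equation:ram-bilat-id1}) at $\omega=-1$) gives the exact identity $\tilde f(q)=2\,\frac{j(-1;q)}{J_1}\,m(1,q,-1)$ --- an Appell--Lerch sum times a prefactor, with no extra theta piece at this stage. Second, and decisively, one needs the Eulerian representations (\ref{equation:RLNid2}) and (\ref{equation:RLNid4}) with $x=1$, which write $m(1,q,-1)=\Sigma_{\pm}(q)\mp\frac{J_{1,2}^2}{2j(-1;q)}$ for two explicit $q$-hypergeometric series $\Sigma_{\pm}$. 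Multiplying by the prefactor $2j(-1;q)/J_1=4(-q;q)_\infty^2$, which vanishes radially at $\zeta$ through its factor $(1+q^k)$, kills the $\Sigma_{\pm}$ term (once one checks it stays bounded) and leaves exactly $\mp J_{1,2}^2/J_1=\mp b(q)$. No ``transformation of $\mu$ under $q\mapsto\zeta$'' is invoked, and the sign $(-1)^k$ does not come from a central symmetry of a finite theta character: it comes from having to choose between the two Eulerian representations according to the parity of $k$, since only one of $\Sigma_+$, $\Sigma_-$ remains bounded as $q\to\zeta$ for a given parity (the paper uses (\ref{equation:RLNid2}) for $k$ odd and (\ref{equation:RLNid4}) for $k$ even). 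Without these two identities --- which are precisely the Lost Notebook ingredients you defer --- the argument does not close, so what you have is a correct framing of Zudilin's proof rather than a proof.
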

\noindent Folsom, Ono, and Rhoades also proved a more general result of Theorem \ref{theorem:thm-for}, see \cite[Theorem $1.2$]{FOR1, FOR2}.

Zudilin \cite{Zu} gave an insightful proof of Theorem \ref{theorem:thm-for} using ingredients from Ramanujan's Lost Notebook \cite{RLN}.  Upon studying Zudilin's proof, we see that the ingredients for six radial limit examples are contained in the author's work on the dual nature of Appell--Lerch sums and partial theta functions with applications to constructing mixed mock modular bilateral $q$-hypergeometric series  \cite{Mo1}.   

We note that the $q$-hypergeometric series in (\ref{equation:f-def}) is a unilateral series in the sense that the lower summation limit is finite but the upper limit is infinite.  If both the upper or lower limits are infinite, then we have a bilateral series, for example
\begin{equation}
\sum_{n=-\infty}^{\infty}\frac{q^{n^2}}{(-q;q)_n^2}=\sum_{n=0}^{\infty}\frac{q^{n^2}}{(-q;q)_n^2}+\sum_{n=-\infty}^{-1}\frac{q^{n^2}}{(-q;q)_n^2},\label{equation:bilat-ex}
\end{equation}
where sum from $-\infty$ to $-1$ on the right-hand side of (\ref{equation:bilat-ex}) is called the {\em tail} of the bilateral series found on the left-hand side (\ref{equation:bilat-ex}).

One such radial limit which can be obtain from \cite{Mo1} reads
\begin{theorem}\label{theorem:radial-new} If $\zeta$ is a primitive odd order $2k+1$ root of unity, then, as $q$ approaches $\zeta$ radially within the unit disk, we have that
\begin{equation*}
\lim_{q\rightarrow \zeta} \Big ( q w(q)-\theta(q) \Big )=-\sum_{n=0}^{k}(1-\zeta)^2(1-\zeta^3)^2\cdots(1-\zeta^{2n-1})^2\zeta^{2n+1},\label{equation:new-rad}
\end{equation*}
where 
\begin{equation*}
w(q):=\sum_{n=0}^{\infty}\frac{q^{2n(n+1)}}{(q;q^2)_{n+1}^2} {\text{ and }} \theta(q):=\frac{\overline{J}_{1,4}^2\overline{J}_{2,4}}{\overline{J}_{0,4}J_{2}},
\end{equation*}
are a third order mock theta function and a theta function, respectively.
\end{theorem}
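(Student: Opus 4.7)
The plan is to realize $qw(q)$ as the nonnegative-$n$ portion of a bilateral $q$-hypergeometric series whose total sum is the theta function $\theta(q)$, exactly in the spirit of Zudilin's proof of Theorem \ref{theorem:thm-for}. Granting the bilateral identity (to be extracted from \cite{Mo1}), the radial limit of $qw(q)-\theta(q)$ equals the negative of the radial limit of the \emph{tail}, and the tail will collapse to the desired finite sum at an odd-order root of unity.

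Concretely, I would introduce
\[
B(q) := \sum_{n=-\infty}^{\infty}\frac{q^{2n(n+1)+1}}{(q;q^2)_{n+1}^2},
\]
whose $n\ge 0$ portion is precisely $qw(q)$. The key input is a bilateral representation $B(q)=\theta(q)$ coming from the author's results on the dual nature of Appell--Lerch sums and partial theta functions \cite{Mo1}. Establishing (i.e.\ locating and applying) this identity is where I expect the main technical obstacle to lie, since it requires recognizing the Appell--Lerch expression attached to $w(q)$ together with its partial-theta dual as the particular theta quotient $\overline{J}_{1,4}^{2}\overline{J}_{2,4}/(\overline{J}_{0,4}J_{2})$.

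Given $B(q)=\theta(q)$, we obtain $qw(q)-\theta(q)=-T(q)$ where
\[
T(q) := \sum_{n=-\infty}^{-1}\frac{q^{2n(n+1)+1}}{(q;q^2)_{n+1}^2}.
\]
Reindexing by $n=-m-1$ with $m\ge 0$ yields $2n(n+1)+1=2m(m+1)+1$ and $(q;q^2)_{n+1}=(q;q^2)_{-m}$. Applying the reflection $(q;q^2)_{-m}=\prod_{j=1}^{m}(1-q^{1-2j})^{-1}$ together with the identity $1-q^{-a}=-q^{-a}(1-q^{a})$, the total $q$-exponent collapses (the contribution $-2m^{2}$ from the reflection cancels the $2m^{2}$ inside $2m(m+1)$) and a short calculation gives
\[
T(q) = \sum_{m=0}^{\infty}q^{2m+1}\prod_{j=1}^{m}(1-q^{2j-1})^{2}.
\]

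Finally, I specialize to $q=\zeta$, a primitive $(2k+1)$-st root of unity. Since $\gcd(2,2k+1)=1$, the first vanishing of $1-q^{2j-1}$ at $\zeta$ occurs precisely at $j=k+1$, so every term with $m\ge k+1$ contains $(1-\zeta^{2k+1})^{2}=0$ and drops out. The remaining terms $0\le m\le k$ are polynomials in $q$, so continuity gives $\lim_{q\to\zeta}T(q)=\sum_{n=0}^{k}\zeta^{2n+1}\prod_{j=1}^{n}(1-\zeta^{2j-1})^{2}$. Combining with $\lim_{q\to\zeta}(qw(q)-\theta(q))=-\lim_{q\to\zeta}T(q)$ yields the stated formula.
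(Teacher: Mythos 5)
Your handling of the tail is correct: after the substitution $n=-m-1$ the tail of your bilateral series becomes $\sum_{m\ge 0}q^{2m+1}(q;q^2)_m^2$ (this is the series the paper calls $U_2(1;q)$), and your observation that it terminates at a primitive $(2k+1)$-st root of unity because the first vanishing of $1-\zeta^{2j-1}$ occurs at $j=k+1$ is also right. The genuine gap is the key input you postulate, namely $B(q)=\theta(q)$. That identity is false: already the constant terms disagree, since $B(q)$ has no constant term while $\theta(q)$ has constant term $\tfrac12$. What Proposition \ref{proposition:bilateral-mxqz-prop} (Entry 3.4.7 of the Lost Notebook) actually gives is
\[
B(q)=qw(q)+U_2(1;q)=-\frac{J_{1,2}}{J_2}\,m(1,q^2,q),
\]
a theta quotient times an Appell--Lerch sum. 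The bilateral series is \emph{mixed mock modular}, not modular --- this is exactly the distinction the paper draws between its examples and those of \cite{BKLR}, where the bilateral series are genuinely modular and an argument of your shape does work.

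To recover $\theta(q)$ one must further invoke identity (\ref{equation:RLNid5}) with $x=1$, which expresses $m(1,q^2,q)$ as an Eulerian series minus $\overline{J}_{1,4}^2\overline{J}_{2,4}/(\overline{J}_{0,4}J_{1,2})$, yielding
\[
qw(q)-\theta(q)=-U_2(1;q)-\frac{J_{1,2}}{2J_2}\sum_{n= 0}^{\infty}\frac{(-1)^nq^{2n^2}(q^2;q^4)_n}{(-q^4;q^4)_n^2}.
\]
So in place of your exact identity $qw(q)-\theta(q)=-T(q)$ one only has $qw(q)-\theta(q)=-T(q)+E(q)$, where $E(q)=B(q)-\theta(q)$ is the second summand above, and the essential missing step is to show $E(q)\to 0$ as $q\to\zeta$ radially: the prefactor $J_{1,2}/J_2=(q;q^2)_{\infty}^2$ vanishes at an odd-order root of unity while the accompanying Eulerian sum remains bounded there. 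Your final numerical answer is correct, but without identifying and controlling this error term --- which is the heart of the paper's argument --- the proof is incomplete.
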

In this note we use the setting of mixed mock modular bilateral $q$-hypergeometric series to give five more examples of radial limits of mock theta functions.  In Section \ref{section:notation} we recall basic notation and useful results on Appell--Lerch sums.  In Section \ref{section:zudilin} we discuss Zudilin's proof so that the reader can see the various pieces of all six examples. In Section \ref{section:new-theorem} we prove Theorem \ref{theorem:radial-new}, and in Section \ref{section:radials} we give four more radial limit results.  In Section \ref{section:GM} we derive the bilateral series with mixed mock modular behaviour for the universal mock theta function $g_2(x,q)$ of Gordon and McIntosh \cite{GM}.    Using identities found in \cite{GM} one can then obtain radial limits for second, sixth, eighth, and tenth order mock functions which can be written in terms of $g_2(x,q)$.

In light of \cite{Zu}, we see that the author's work \cite{Mo1} is surprisingly well-suited for radial limit examples such as \cite{BKLR, FOR1, FOR2} and for addressing questions found at the end of \cite{R}.   We point out that \cite{BKLR} considers radial limits of known mock theta function identities that can be written as bilateral series with modular behaviour.  Our bilateral series are, to the best of our knowledge, new; moreover, they have mixed mock modular behaviour. 

\section{Notation and Technical Details}\label{section:notation}

 Let $q$ be a nonzero complex number with $|q|<1$ and define $\mathbb{C}^*:=\mathbb{C}-\{0\}$.  Recall
 \begin{allowdisplaybreaks}
\begin{gather}
(x)_n=(x;q)_n:=\prod_{i=0}^{n-1}(1-q^ix), \ \ (x)_{\infty}=(x;q)_{\infty}:=\prod_{i\ge 0}(1-q^ix),\notag \\
{\text{and }} \ \ j(x;q):=(x)_{\infty}(q/x)_{\infty}(q)_{\infty}=\sum_{n=-\infty}^{\infty}(-1)^nq^{\binom{n}{2}}x^n,\label{equation:theta-def}
\end{gather}
\end{allowdisplaybreaks}%
where in the last line the equivalence of product and sum follows from Jacobi's triple product identity.    Here $a$ and $m$ are integers with $m$ positive.  Define
\begin{gather*}
J_{a,m}:=j(q^a;q^m), \ \ J_m:=J_{m,3m}=\prod_{i\ge 1}(1-q^{mi}), \ {\text{and }}\overline{J}_{a,m}:=j(-q^a;q^m).
\end{gather*}
We define our building block for Hecke-type double sums as in \cite{HM}:
\begin{equation}
f_{a,b,c}(x,y,q):=\Big (\sum_{r,s\ge0} -\sum_{r,s<0}\Big ) (-1)^{r+s}x^ry^sq^{a\binom{r}{2}+brs+c\binom{s}{2}}.\label{equation:fabc-def}\\
\end{equation}
We will also use the following definition of an Appell--Lerch sum \cite{HM}:
\begin{equation}
m(x,q,z):=\frac{1}{j(z;q)}\sum_{r=-\infty}^{\infty}\frac{(-1)^rq^{\binom{r}{2}}z^r}{1-q^{r-1}xz}.\label{equation:mdef-eq}
\end{equation}
The Appell--Lerch sum $m(x,q,z)$ satisfies several functional equations and identities, which we collect in the form of a proposition:

 \begin{proposition}  \cite{L1, HM} For generic $x,z,z_0,z_1\in \mathbb{C}^*$
{\allowdisplaybreaks \begin{subequations}
\begin{equation}
m(x,q,z)=m(x,q,qz),\label{equation:m-fnq-z}
\end{equation}
\begin{equation}
m(x,q,z)=x^{-1}m(x^{-1},q,z^{-1}),\label{equation:m-fnq-flip}
\end{equation}
\begin{equation}
m(qx,q,z)=1-xm(x,q,z),\label{equation:m-fnq-x}
\end{equation}
\begin{equation}
m(x,q,z_1)-m(x,q,z_0)=\frac{z_0J_1^3j(z_1/z_0;q)j(xz_0z_1;q)}{j(z_0;q)j(z_1;q)j(xz_0;q)j(xz_1;q)},\label{equation:m-change-z}
\end{equation}
\begin{equation}
m(x,q,z)=m(x,q,x^{-1}z^{-1}).\label{equation:m-fnq-zflip}
\end{equation}
\end{subequations}}
\end{proposition}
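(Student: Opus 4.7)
My plan is to prove the five identities in order: first (\ref{equation:m-fnq-z}), (\ref{equation:m-fnq-flip}), and (\ref{equation:m-fnq-x}) by direct series manipulation starting from (\ref{equation:mdef-eq}); then the main identity (\ref{equation:m-change-z}) by an elliptic function argument in $z_1$; and finally (\ref{equation:m-fnq-zflip}) as a specialization of (\ref{equation:m-change-z}). The two basic theta-function tools used throughout are the quasi-periodicity $j(qu;q)=-u^{-1}j(u;q)$ and the inversion $j(u^{-1};q)=-u^{-1}j(u;q)$, both of which follow from (\ref{equation:theta-def}).

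For (\ref{equation:m-fnq-z}), substituting $z\mapsto qz$ in the series produces a factor $q^r$ which combines with $q^{\binom{r}{2}}$ into $q^{\binom{r+1}{2}}$; reindexing $r\mapsto r-1$ restores the denominator $1-q^{r-1}xz$, and the leftover overall factor matches $j(qz;q)/j(z;q)=-z^{-1}$. For (\ref{equation:m-fnq-flip}), reindexing $r\mapsto -r$, using $\binom{-r}{2}=\binom{r+1}{2}$, and applying the algebraic identity $(1-q^{-r-1}xz)^{-1}=-q^{r+1}x^{-1}z^{-1}(1-q^{r+1}x^{-1}z^{-1})^{-1}$ converts the series for $m(x,q,z)$ into $(-x^{-1}z)\,j(z^{-1};q)\,m(x^{-1},q,z^{-1})/j(z;q)$, and the inversion formula collapses the prefactor to $x^{-1}$. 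For (\ref{equation:m-fnq-x}), expand $j(z;q)=\sum_r(-1)^rq^{\binom{r}{2}}z^r$ and subtract it from the series for $m(qx,q,z)$ to write $m(qx,q,z)-1$ as a single sum with summand $(-1)^rq^{\binom{r}{2}}z^r\cdot q^rxz/(1-q^rxz)$; absorbing the extra $q^r$ into $q^{\binom{r+1}{2}}$ and shifting $r\mapsto r-1$ identifies the result with $-xm(x,q,z)$.

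The substantive step is (\ref{equation:m-change-z}). I view both sides as meromorphic functions of $z_1$ with $z_0$, $x$, $q$ held fixed. By (\ref{equation:m-fnq-z}) the LHS is invariant under $z_1\mapsto qz_1$, and a direct calculation applying $j(qu;q)=-u^{-1}j(u;q)$ separately to each of the four theta factors $j(z_1/z_0;q)$, $j(xz_0z_1;q)$, $j(z_1;q)$, $j(xz_1;q)$ shows that the two quasi-periodicity factors in the numerator and denominator both contribute $(xz_1^2)^{-1}$, so the RHS is also $z_1\mapsto qz_1$ invariant. Hence LHS $-$ RHS is elliptic in $z_1$, with at worst simple poles at the two points $z_1=1$ and $z_1=x^{-1}$ on the torus $\mathbb{C}^*/q^{\Z}$. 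One computes directly, using the inversion formula for $j$, that the residues of LHS and RHS at $z_1=x^{-1}$ both equal $-(xj(x;q))^{-1}$; the residue theorem on the torus then forces the residues at $z_1=1$ to cancel as well. Thus LHS $-$ RHS is a holomorphic elliptic function, hence a constant, and evaluating at $z_1=z_0$ yields $0$ on both sides (the LHS trivially; the RHS because the factor $j(z_1/z_0;q)\vert_{z_1=z_0}=j(1;q)=0$). The main obstacle is the careful verification of ellipticity of the RHS and the residue computation at $z_1=x^{-1}$; once those are in hand, the residue theorem does the rest without any need to directly evaluate the harder residue at $z_1=1$.

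Identity (\ref{equation:m-fnq-zflip}) then drops out of (\ref{equation:m-change-z}) by specialization: setting $z_0=x^{-1}z^{-1}$ and $z_1=z$ makes $xz_0z_1=1$, so that $j(xz_0z_1;q)=j(1;q)=0$ forces the RHS to vanish, yielding $m(x,q,z)=m(x,q,x^{-1}z^{-1})$.
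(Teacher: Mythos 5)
Your proof is correct. The paper itself offers no proof of this proposition --- it is simply quoted from Lerch and from Hickerson--Mortenson --- but your argument (direct series manipulation for the first three identities, the ellipticity-in-$z_1$ plus residue-matching argument for \eqref{equation:m-change-z}, and the specialization $z_0=x^{-1}z^{-1}$ killing the numerator via $j(1;q)=0$ for \eqref{equation:m-fnq-zflip}) is essentially the standard proof found in those references, and all the computational details you cite (the quasi-periodicity factors $(xz_1^2)^{-1}$ on both sides, and the common residue $-(xj(x;q))^{-1}$ at $z_1=x^{-1}$) check out.
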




The next proposition is based on equations of \cite{RLN} many of which were proved in \cite{AM}.  The symbol $\sum^{\ast}$ indicates convergence problems, so care should be taken.

\begin{proposition}\cite[Proposition 2.6]{Mo1}\label{proposition:eulerian-mxqz-prop} We have
{\allowdisplaybreaks \begin{align}
(1+x^{-1})\sum_{n= 0}^{\infty}\frac{q^{n+1}(-q)_{2n}}{(qx,q/x;q^2)_{n+1}}&=-m(x,q^2,q)\label{equation:RLNid1}\\
\sum_{n= 0}^{\infty}\frac{(-1)^nq^{n^2}(q;q^2)_{n}}{(-x;q^2)_{n+1}(-q^2/x;q^2)_{n}}&=m(x,q,-1)+\frac{J_{1,2}^2}{2j(-x;q)}\label{equation:RLNid2}\\
&=2m(x,q,-1)-m(x,q,\sqrt{-q/x}) \notag \\
& =m(-qx^2,q^4,-q^{-1})-q^{-1}xm(-q^{-1}x^2,q^4,-q)\notag\\
\sum_{n= 0}^{\infty}{}^{\ast}\frac{(-1)^n(q;q^2)_n}{(-x)_{n+1}(-q/x)_n}&=m(x,q,-1)\label{equation:RLNid3}\\
\Big ( 1+\frac{1}{x}\Big )\sum_{n=0}^{\infty}\frac{(-1)^n(q;q^2)_nq^{(n+1)^2}}{(-xq,-q/x;q^2)_{n+1}}&=m(x,q,-1)-\frac{J_{1,2}^2}{2j(-x;q)}\label{equation:RLNid4}\\
\sum_{n= 0}^{\infty}\frac{(-1)^nq^{2n^2}(q^2;q^4)_{n}}{(-x;q^4)_{n+1}(-q^4/x;q^4)_{n}}&=m(x,q^2,q)+\frac{\overline{J}_{1,4}^2j(-xq^2;q^4)}{j(-x;q^4)j(xq;q^2)}\label{equation:RLNid5}
\end{align}}%
where \cite[Entry $12.3.3$]{ABI}
\begin{equation}
\sum_{n= 0}^{\infty}{}^{\ast}\frac{(-1)^n(q;q^2)_n}{(-x)_{n+1}(-q/x)_n}
:=\frac{1}{\overline{J}_{0,1}}\sum_{n=-\infty}^{\infty}\frac{(1+1/x)q^{n(n+1)/2}}{(1+xq^n)(1+q^n/x)}.\label{equation:sumstar-def}
\end{equation}
\end{proposition}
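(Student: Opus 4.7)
The plan is to follow the strategy underlying \cite[Proposition 2.6]{Mo1} and its sources \cite{AM, ABI}: establish the defining symmetric-Lambert form (\ref{equation:sumstar-def}) by a direct $q$-series manipulation, read off (\ref{equation:RLNid3}) from the definition (\ref{equation:mdef-eq}) of $m(x,q,-1)$, and for the remaining four identities realize each unilateral left-hand side as the ``head'' of a bilateral $q$-hypergeometric series whose evaluation produces an Appell--Lerch function plus a theta-quotient correction.

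First I would prove (\ref{equation:sumstar-def}). Starting from the right-hand side, decompose $1/\bigl((1+xq^n)(1+q^n/x)\bigr)$ by partial fractions, splitting the bilateral Lambert series into two pieces. Each piece is summed by expanding $(1+xq^n)^{-1}$ (respectively $(1+q^n/x)^{-1}$) as a geometric series on the appropriate half-range, swapping the order of summation, and reassembling via Jacobi's triple product (\ref{equation:theta-def}). The outcome is the alternating unilateral expression on the left of (\ref{equation:sumstar-def}), with $(-x)_{n+1}(-q/x)_n$ emerging from Pochhammer telescoping. Identity (\ref{equation:RLNid3}) then follows on comparing (\ref{equation:sumstar-def}) to (\ref{equation:mdef-eq}) evaluated at $z=-1$ and using (\ref{equation:m-fnq-zflip}) to symmetrize the summation index.

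For (\ref{equation:RLNid1}), (\ref{equation:RLNid2}), (\ref{equation:RLNid4}), and (\ref{equation:RLNid5}) my strategy would be the bilateral-extension scheme of \cite{AM}. In each case I would extend the unilateral sum $\sum_{n\ge 0}$ to a bilateral $\sum_{n\in\Z}$; the formal tail $n<0$, after the substitution $n\mapsto -1-n$, converts into a series of the same shape as the head up to an overall theta-quotient factor. The resulting bilateral sum then collapses by a Bailey-type ${}_{6}\psi_{6}$ (or an appropriate specialization) to the target Appell--Lerch sum $m$ at the stated modulus. The correction terms $\pm J_{1,2}^{2}/\bigl(2\,j(-x;q)\bigr)$ in (\ref{equation:RLNid2}) and (\ref{equation:RLNid4}), and the analogous term in (\ref{equation:RLNid5}), are precisely the theta-quotient picked up from the $n<0$ reindexing. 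The equivalent forms of (\ref{equation:RLNid2}) are then extracted from the functional equations (\ref{equation:m-fnq-z})--(\ref{equation:m-fnq-zflip}) and the change-of-$z$ identity (\ref{equation:m-change-z}) with judicious choices of $z_{0},z_{1}$, together with a level-raising step $q\mapsto q^{2}$, $x\mapsto -q^{-1}x^{2}$ for the last form.

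I expect the main obstacle to be the rigorous handling of convergence for the Lambert-series step leading to (\ref{equation:RLNid3}): the unilateral left-hand side diverges in the ordinary sense (its $n$th term tends to a nonzero constant times $(-1)^{n}$), so only the symmetric partial-sum prescription (\ref{equation:sumstar-def}) gives it meaning, and every interchange of summations in the partial-fraction and geometric-series steps must be tracked as a symmetric limit. A secondary difficulty is choosing $(z_{0},z_{1})$ in (\ref{equation:m-change-z}) so that the resulting theta quotient reproduces each correction term exactly on the nose; this is bookkeeping, but easy to botch through sign or index errors.
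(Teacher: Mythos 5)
First, a point of calibration: the paper does not prove this proposition. It is quoted from \cite[Proposition 2.6]{Mo1}, and the sentence introducing it attributes the underlying identities to Ramanujan's Lost Notebook \cite{RLN}, ``many of which were proved in \cite{AM}''; the recasting in terms of $m(x,q,z)$ rests on the Appell--Lerch machinery of \cite{HM}. So there is no in-paper argument to compare against, and your proposal must stand on its own. The part that does stand is your treatment of (\ref{equation:sumstar-def}) and (\ref{equation:RLNid3}): since the left-hand side of (\ref{equation:sumstar-def}) is a \emph{definition}, the only content is the Lambert-series identity, and the partial-fraction step works as you describe. Writing $\frac{1+1/x}{(1+xq^n)(1+q^n/x)}=\frac{1}{x-1}\bigl(\frac{x}{1+xq^n}-\frac{x^{-1}}{1+q^n/x}\bigr)$ and sending $n\mapsto -n$ in the second piece collapses the right-hand side of (\ref{equation:sumstar-def}) to $\frac{1}{\overline{J}_{0,1}}\sum_{n\in\Z}q^{n(n+1)/2}/(1+xq^n)$, which is $m(x,q,-1)$ by (\ref{equation:mdef-eq}) after $r=n+1$; you do not need (\ref{equation:m-fnq-zflip}) for this.

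The genuine gap is in (\ref{equation:RLNid1}), (\ref{equation:RLNid2}), (\ref{equation:RLNid4}), and (\ref{equation:RLNid5}), which is where all the content lives. You assert, without verification, that after bilateralizing each sum the tail under $n\mapsto -1-n$ reproduces the head up to a theta quotient, and that the resulting bilateral series ``collapses by a Bailey-type ${}_6\psi_6$.'' Neither claim holds as stated: these series are not very-well-poised ${}_6\psi_6$'s (the bases $q$ and $q^2$ are mixed in (\ref{equation:RLNid1}) and (\ref{equation:RLNid2})), and Bailey's ${}_6\psi_6$ summation evaluates to a theta \emph{product}, so no specialization of it can output the mock-modular object $m(x,q,z)$, whose kernel $1/(1-q^{r-1}xz)$ is not of Jacobi-triple-product shape. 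The routes that actually work are Andrews' in \cite{AM} (limiting cases of Watson's $_8\phi_7\rightarrow{}_4\phi_3$ transformation, or the constant-term method), or the \cite{HM}/\cite{Mo1} route of showing that the Eulerian side satisfies the $q$-difference equation in $x$ matching (\ref{equation:m-fnq-x}) and identifying the elliptic difference with the stated theta quotient. Relatedly, you misattribute the corrections $\pm J_{1,2}^2/\bigl(2j(-x;q)\bigr)$: they are not artifacts of the $n<0$ reindexing but change-of-$z$ discrepancies, namely $m(x,q,-1)-m(x,q,\sqrt{-q/x})$ evaluated by (\ref{equation:m-change-z}) --- this is precisely what the second displayed form of (\ref{equation:RLNid2}) records --- so a computation organized around your proposed mechanism would not land on the stated right-hand sides.
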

Although there does not appear to be a way to represent $m(x,q,z)$ as an Eulerian form, one can write $m(x,q,z)$ as a bilateral sum:
\begin{proposition}{\cite[p. $15$]{RLN}, \cite[Entry $3.4.7$]{ABII}}\label{proposition:bilateral-mxqz-prop} For $a,b\ne 0$,
\begin{align}
\sum_{n=0}^{\infty}\frac{a^{-n-1}b^{-n}}{(-1/a;q)_{n+1}(-q/b;q)_n}q^{n^2}&+\sum_{n=1}^{\infty}(-aq;q)_{n-1}(-b;q)_{n}q^{n}\label{equation:3.4.7-ABII}\\
&=\frac{(-aq)_{\infty}}{b(q)_{\infty}(-q/b)_{\infty}}j(-b;q)m\big ( a/b,q,-b\big ).\notag
\end{align}
\end{proposition}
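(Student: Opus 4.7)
The plan is to verify the identity directly from the series definition \eqref{equation:mdef-eq} of the Appell--Lerch sum. Substituting $x = a/b$ and $z = -b$ gives
\[
j(-b;q)\,m(a/b,q,-b) \;=\; \sum_{r=-\infty}^{\infty}\frac{q^{\binom{r}{2}}b^{r}}{1+q^{r-1}a},
\]
so, after reindexing $r = n+1$, the identity of the proposition is equivalent to
\[
S_1 + S_2 \;=\; \frac{(-aq;q)_\infty}{(q;q)_\infty (-q/b;q)_\infty}\sum_{n=-\infty}^{\infty}\frac{q^{n(n+1)/2}b^{n}}{1+aq^{n}},
\]
where $S_1$ and $S_2$ denote the first and second sums on the left-hand side of the proposition.

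I would split the bilateral sum at $n = 0$, aiming to match the $n \ge 1$ portion with $S_2$ and the $n \le 0$ portion (after $n \mapsto -m$) with $S_1$. For the $n \ge 1$ piece, the key cancellation is
\[
\frac{(-aq;q)_\infty}{1+aq^{n}} \;=\; (-aq;q)_{n-1}\,(-aq^{n+1};q)_\infty,
\]
which clears the denominator. The remaining ratio $(-aq^{n+1};q)_\infty/[(q;q)_\infty(-q/b;q)_\infty]$ is then expanded as a power series in $b$ via a $q$-binomial / Heine-type identity (using Euler's $\sum_k q^{\binom{k}{2}} z^k/(q;q)_k = (-z;q)_\infty$ in one direction, and $1/(z;q)_\infty = \sum_k z^k/(q;q)_k$ in the other). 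After interchanging summations --- justified by absolute convergence for $|q|<1$ and $a,b \ne 0$ --- the inner $n$-sum collapses to the Pochhammer product $(-b;q)_k$ appearing in $S_2=\sum_{n\ge 1}(-aq)_{n-1}(-b)_n q^n$.

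For the tail, set $n = -m$ with $m \ge 0$, so that $q^{n(n+1)/2}b^n = q^{\binom{m}{2}}b^{-m}$ and
\[
\frac{1}{1+aq^{-m}} \;=\; \frac{a^{-1}q^{m}}{1+a^{-1}q^{m}}.
\]
A parallel cancellation against a shifted factor of $(-1/a;q)_\infty$ (introduced by rewriting the prefactor using $(-aq;q)_\infty/(q;q)_\infty$ and its interaction with the $a \mapsto 1/a$ symmetry), combined with an analogous $q$-binomial expansion, should produce the $(-1/a;q)_{m+1}$ denominator and the $q^{m^2}$ numerator of $S_1$ through the standard collapsing of the double sum. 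The main obstacle is the careful bookkeeping of Pochhammer shifts in each step and the justification of the interchange of summations; the argument follows the template of the Ramanujan lost-notebook manipulations treated in \cite[Chapter 3]{ABII}, from which the identity is drawn.
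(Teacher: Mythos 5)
The paper offers no proof of this proposition---it is quoted directly from Ramanujan's Lost Notebook via \cite[Entry 3.4.7]{ABII}---so your argument has to stand on its own, and it does not: the central step is false. Your reduction of the right-hand side to
\[
\frac{(-aq;q)_{\infty}}{(q;q)_{\infty}(-q/b;q)_{\infty}}\sum_{n=-\infty}^{\infty}\frac{q^{n(n+1)/2}b^{n}}{1+aq^{n}}
\]
is correct, but the proposed split at $n=0$---the $n\ge 1$ half equalling $S_{2}$ and the $n\le 0$ half equalling $S_{1}$---cannot be made to work, because the two halves are simply not equal to $S_{2}$ and $S_{1}$ as functions, so no rearrangement or interchange of summation performed \emph{within} one half can produce the claimed match. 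Compare coefficients of $q^{1}$: the prefactor is $1+O(q)$ and the $n\ge 1$ part of the bilateral sum is $bq/(1+aq)+O(q^{3})=bq+O(q^{2})$, so the $n\ge 1$ half has $q$-coefficient $b$; whereas $S_{2}=\sum_{n\ge 1}(-aq;q)_{n-1}(-b;q)_{n}q^{n}=(1+b)q+O(q^{2})$ has $q$-coefficient $1+b$. (Concretely, at $a=0$, $b=1$ the $n\ge 1$ half is $(q^{2};q^{2})_{\infty}^{-1}\sum_{n\ge 1}q^{n(n+1)/2}=q+2q^{3}+\cdots$, while $S_{2}=2q+2q^{2}+4q^{3}+\cdots$.) Since the full identity is true, the $n\le 0$ half then also fails to equal $S_{1}$ by the complementary discrepancy: the two unilateral series each draw contributions from \emph{both} tails of the partial-fraction sum once the theta-quotient prefactor is expanded.

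A symptom of the problem is already visible in your own sketch: the cancellation $(-aq)_{\infty}/(1+aq^{n})=(-aq;q)_{n-1}(-aq^{n+1};q)_{\infty}$ leaves the infinite product $(-aq^{n+1};q)_{\infty}$ attached to the $n$-th term, while the $n$-th term of $S_{2}$ carries only the finite product $(-aq;q)_{n-1}$; expanding the leftover ratio ``in powers of $b$'' cannot remove this $a$-dependence, and the sketch never explains how it disappears. A workable route is genuinely different: Entry 3.4.7 and its relatives are established (see \cite{AM} and \cite[Ch.~3]{ABII}) by a partial-fraction expansion of the two-variable product side in one of the parameters, with theta-function coefficients, or alternatively by verifying that both sides satisfy the same first-order $q$-difference equation in $b$ (compare \eqref{equation:m-fnq-x}) together with an initial condition. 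If you wish to keep a direct series manipulation, you must treat the full bilateral sum and the full products simultaneously rather than one half at a time.
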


We recall the universal mock theta function
\begin{equation}
g_3(x,q):=x^{-1}\Big ( -1 +\sum_{n=0}^{\infty}\frac{q^{n^2}}{(x)_{n+1}(q/x)_{n}} \Big ),\label{equation:g-def}
\end{equation}
as well as the easily shown
\begin{equation}
g_3(x,q)=\sum_{n=0}^{\infty}\frac{q^{n(n+1)}}{(x)_{n+1}(q/x)_{n+1}}.\label{equation:newgid}
\end{equation}

\section{Discussion of Zudilin's Proof} \label{section:zudilin}

We put Zudilin's proof \cite{Zu} into the context of \cite{Mo1}.  First, we recall some definitions
\begin{subequations}
\begin{equation*}
R(\omega;q):=\sum_{n=0}^{\infty}\frac{q^{n^2}}{(\omega q,q/\omega;q)_n},
\end{equation*}
\begin{equation*}
U(\omega;q):=\sum_{n=0}^{\infty}q^{n+1}(\omega q,q/\omega;q)_n.
\end{equation*}
\noindent For the interested reader we point out that
\begin{equation*}
U(\omega;q)=\sum_{n=0}^{\infty}\sum_{m=-\infty}^{\infty}u(m,n)(-\omega)^mq^n:=\sum_{n=0}^{\infty}(\omega q;q)_n(q\omega;q)_nq^{n+1},
\end{equation*}
where $u(m,n)$ is the number of strongly unimodal sequences of size $n$ with rank $m$ \cite[$(1.6)$]{FOR2}, \cite{BOPR}.  
\end{subequations}
By Proposition \ref{proposition:bilateral-mxqz-prop}:
\begin{equation}
R(\omega;q)+\Big ( 1-\omega \Big )\Big ( 1-\omega^{-1}\Big )U(\omega;q)=\Big ( 1-\omega \Big )\frac{j(\omega;q)}{J_1}m(\omega^2,q,\omega^{-1}).\label{equation:ram-bilat-id1}
\end{equation}
Set $\omega=-1$ in (\ref{equation:ram-bilat-id1}).  For $2k$, where $k$ is odd, the result follows from identity (\ref{equation:RLNid2}) of Proposition \ref{proposition:eulerian-mxqz-prop} with $x=1$.  Similarly, for $k$ even the result follows from  identity (\ref{equation:RLNid4}).  We demonstrate the former.  Using (\ref{equation:RLNid2}) with $x=1$ to eliminate $m(1,q,-1)$ from (\ref{equation:ram-bilat-id1}) gives
\begin{align*}
R(-1;q)+4U(-1;q)&=2 \frac{j(-1;q)}{J_1}m(1,q,-1)\\
&= \frac{j(-1;q)}{J_1}\sum_{n=0}^{\infty}\frac{(-1)^nq^{n^2}(q;q^2)_n}{(-q^2;q^2)_n}-\frac{J_{1,2}^2}{J_1}.
\end{align*}
Rearranging terms yields
\begin{align}
R(-1;q)+\frac{J_{1,2}^2}{J_1}=-4U(-1;q)+\frac{j(-1;q)}{J_1}\sum_{n=0}^{\infty}\frac{(-1)^nq^{n^2}(q;q^2)_n}{(-q^2;q^2)_n}.\label{equation:id1-pre}
\end{align}
As $q\rightarrow \zeta$ a root of unity of order $2k$, $k$ odd,  we see that the factor $j(-1;q)/J_1=2(-q;q)_{\infty}^2$ vanishes and that the sum in the second summand on the right-hand side of (\ref{equation:id1-pre}) has a finite value, so the second summand vanishes.  Thus for $\zeta$ a root of unity of order $2k$, $k$ odd, we have
\begin{align}
\lim_{q\rightarrow\zeta}\Big (R(-1;q)+\frac{J_{1,2}^2}{J_1}\Big )=-4U(-1;\zeta),\label{equation:id1-fin}
\end{align}
where the sum on the right-hand side of (\ref{equation:id1-fin}) terminates.  

We point out to the reader that
\begin{equation*}
\frac{1}{1-\omega} R(\omega;q)=1+\omega g_3( \omega,q),
\end{equation*}
and by \cite[Theorem $4.1$]{Mo1} we also have
\begin{align*}
\Big ( 1-\omega^{-1}\Big )U(\omega;q)&=-1-\omega g_3(\omega,q)+\frac{j(\omega;q)}{J_1}m(\omega^2,q,\omega^{-1}),\\
&=-1-\omega g_3(\omega,q)+\frac{j(\omega;q)}{J_1}m(\omega^2,q,-1) +\frac{1}{2}\frac{j(-\omega;q)^3j(q\omega^2;q^2)}{J_2^2j(\omega^4;q^2)}.
\end{align*}

\section{Proof of Theorem \ref{theorem:radial-new}}\label{section:new-theorem}
We introduce a few terms:
\begin{subequations}
\begin{equation*}
R_2(\omega;q):=\sum_{n=0}^{\infty}\frac{q^{2n^2+2n+1}}{(\omega q,q/\omega;q^2)_{n+1}},
\end{equation*}
\begin{equation*}
U_2(\omega;q):=\sum_{n=0}^{\infty}q^{2n+1}(\omega q,q/\omega;q^2)_{n}.
\end{equation*}
\end{subequations}
By Proposition \ref{proposition:bilateral-mxqz-prop}:
\begin{equation}
R_2(\omega;q)+U_2(\omega;q)=-\omega \cdot \frac{j(\omega q;q^2)}{J_2}m(\omega^2,q^2,\omega^{-1}q).\label{equation:rankcrank2}
\end{equation}
We have that
\begin{equation*}
R_2(\omega;q)=qg_3(\omega q,q^2)
\end{equation*}
and by \cite[Theorem $4.2$]{Mo1}:
\begin{align*}
U_2(\omega;q)&=-qg_3(\omega q,q^2)-\omega \cdot \frac{j(\omega q;q^2)}{J_2}m(\omega^2,q^2,-1)+\frac{1}{2}\frac{\omega j(-\omega q;q^2)^3j(\omega^2;q^4)}{J_4^2j(\omega^4;q^4)}\\
&=-qg_3(\omega q,q^2)-\omega \cdot \frac{j(\omega q;q^2)}{J_2}m(\omega^2,q^2,\omega^{-1}q).
\end{align*}
Setting $\omega=1$ in (\ref{equation:rankcrank2}) and using (\ref{equation:RLNid5}) with $x=1$ gives
\begin{align*}
qw(q)+U_2(1;q)&=-\frac{J_{1,2}}{J_2}m(1,q^2,q)\\
&=-\frac{J_{1,2}}{2J_2}\sum_{n=0}^{\infty}\frac{(-1)^nq^{2n^2}(q^2;q^4)_n}{(-q^4;q^4)_n^2}+\frac{\overline{J}_{1,4}^2\overline{J}_{2,4}}{\overline{J}_{0,4}J_{2}}.
\end{align*}
Rearranging terms yields
\begin{align}
qw(q)-\frac{\overline{J}_{1,4}^2\overline{J}_{2,4}}{\overline{J}_{0,4}J_{2}}=-U_2(1;q)-\frac{J_{1,2}}{2J_2}\sum_{n=0}^{\infty}\frac{(-1)^nq^{2n^2}(q^2;q^4)_n}{(-q^4;q^4)_n^2}.\label{equation:id2-pre}
\end{align}
Noting that $J_{1,2}/J_2=(q;q^2)_{\infty}^2$ we see for $q\rightarrow \zeta$ an odd root of unity that the second summand on the right-hand side of (\ref{equation:id2-pre}) vanishes and that the first summand on right-hand side of (\ref{equation:id2-pre}) terminates.  The result follows.

\section{More Radial Limit Theorems}\label{section:radials}
In this section we recall results from \cite{Mo1} in order to give four more examples of radial limits for mock theta functions.  Showing that the respective Eulerian forms have finite values for the given roots of unity $\zeta$ as $q\rightarrow \zeta$ is fairly straightforward, so we omit the arguments.  For examples, one could see \cite{BKLR}.  We point out to the reader that the four functions $V_k(\omega;q)$ are new results of \cite{Mo1}.

\subsection{Example 1} We define
\begin{equation}
S_1(\omega;q):=\Big ( 1+\omega^{-1}\Big )\sum_{n=0}^{\infty}\frac{(q;q^2)_n(-1)^nq^{(n+1)^2}}{(-\omega q,-q/\omega;q^2)_{n+1}},
\end{equation}
\begin{equation}
V_1(\omega;q):=\Big ( 1+\omega^{-1}\Big )\sum_{n=0}^{\infty}\frac{q^{2n+1}(-\omega q,-q/\omega ;q^2)_{n}}{(q;q^2)_{n+1}}.
\end{equation}
By  identity (\ref{equation:RLNid4}) of Proposition \ref{proposition:eulerian-mxqz-prop}:
\begin{equation}
S_1(\omega;q)=m(\omega,q,-1)-\frac{J_{1,2}^2}{2j(-\omega;q)}.\label{equation:ABII-6.3.6-dual}
\end{equation}
By \cite[$(4.20)$]{Mo1}:
\begin{align}
V_1(\omega;q)=-m(\omega,q,-1)+\frac{j(-\omega q;q^2)}{J_1}\Big ( 1-\omega g_3(-\omega,q^2)\Big )-\frac{1}{2}\frac{J_{1,2}^2}{j(-\omega;q)},
\end{align}
thus
\begin{equation}
S_1(\omega;q)+V_1(\omega;q)=\frac{j(-\omega q;q^2)}{J_1}\Big ( 1-\omega g_3(-\omega,q^2)\Big )-\frac{J_{1,2}^2}{j(-\omega;q)}.
\end{equation}
Noting that \cite[$(4.4)$]{HM}
\begin{equation}
1-g_3(-1,q^2)=\frac{1}{2}\sum_{n=0}^{\infty}\frac{q^{2n^2}}{(-q^2;q^2)_n^2},
\end{equation}
and that $\overline{J}_{1,2}/J_1=(-q;q^2)_{\infty}^2(-q;q)_{\infty}$ it follows that
\begin{theorem} If $\zeta$ is a primitive even order $2k$ root of unity, $k$ odd, then, as $q$ approaches $\zeta$ radially within the unit disk, we have that
\begin{equation}
\lim_{q\rightarrow \zeta} \Big ( S_1(1;q)+\frac{J_{1,2}^2}{\overline{J}_{0,1}}\Big )=-2\sum_{n=0}^{\frac{k-1}{2}}\frac{\zeta^{2n+1}(-\zeta;\zeta^2)_n^2}{(\zeta;\zeta^2)_{n+1}}.
\end{equation}
\end{theorem}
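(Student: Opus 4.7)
The plan is to work from the combined identity
\[
S_1(\omega;q)+V_1(\omega;q)=\frac{j(-\omega q;q^2)}{J_1}\bigl(1-\omega g_3(-\omega,q^2)\bigr)-\frac{J_{1,2}^2}{j(-\omega;q)}
\]
stated earlier in the section, specialize $\omega=1$, and rearrange. Since $j(-q;q^2)=\overline{J}_{1,2}$ and $j(-1;q)=\overline{J}_{0,1}$, this yields
\[
S_1(1;q)+\frac{J_{1,2}^2}{\overline{J}_{0,1}}=-V_1(1;q)+\frac{\overline{J}_{1,2}}{J_1}\bigl(1-g_3(-1,q^2)\bigr).
\]
The proof then splits into two pieces: showing that the second summand on the right vanishes in the radial limit, and showing that $V_1(1;q)$ terminates into the desired finite sum at $q=\zeta$.

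For the vanishing piece, I would invoke the identity $\overline{J}_{1,2}/J_1=(-q;q^2)_\infty^2(-q;q)_\infty$ together with the Hikami-type expansion $1-g_3(-1,q^2)=\tfrac{1}{2}\sum_{n\ge 0} q^{2n^2}/(-q^2;q^2)_n^2$. If $\zeta$ is a primitive $2k$-th root of unity with $k$ odd, then $\zeta^k=-1$, so $(1+\zeta^k)=0$ appears in $(-q;q^2)_\infty$ at $q=\zeta$, making the prefactor tend to $0$. At the same time $\zeta^2$ has odd order $k$, so $1+\zeta^{2j}\neq 0$ for all $j$; thus the series $\tfrac{1}{2}\sum q^{2n^2}/(-q^2;q^2)_n^2$ has no pole as $q\to\zeta$ radially, and the standard radial-limit argument (as in \cite{BKLR}) shows it has a finite limiting value. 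The product of the two therefore vanishes.

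For the termination piece, I would expand
\[
V_1(1;q)=2\sum_{n=0}^{\infty}\frac{q^{2n+1}(-q;q^2)_n^2}{(q;q^2)_{n+1}}
\]
and analyze it at $q=\zeta$. The denominator factors are $1-\zeta^{2j+1}$, which are nonzero since $2j+1$ is odd while the order $2k$ is even. The numerator contains $(-q;q^2)_n^2$, and $1+\zeta^{2j+1}=0$ occurs precisely when $2j+1\equiv k\pmod{2k}$; with $k$ odd the least such index is $j=(k-1)/2$. Hence the $n$-th term of $V_1(1;\zeta)$ vanishes for $n\ge (k+1)/2$, leaving
\[
V_1(1;\zeta)=2\sum_{n=0}^{(k-1)/2}\frac{\zeta^{2n+1}(-\zeta;\zeta^2)_n^2}{(\zeta;\zeta^2)_{n+1}}.
\]
Combining the two pieces yields the claimed limit.

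The main subtlety, and what the paper defers to \cite{BKLR}, is the first step: rigorously certifying that the Eulerian form $\sum q^{2n^2}/(-q^2;q^2)_n^2$ remains bounded as $q\to\zeta$ radially, so that multiplication by a vanishing theta quotient actually kills it. This is the only nontrivial analytic input; once it is granted, the algebraic steps above are mechanical, and identifying the termination index $(k-1)/2$ is simply a matter of locating where the odd zeros of $1+q^{2j+1}$ first occur on the chosen root of unity.
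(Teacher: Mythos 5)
Your proposal is correct and follows essentially the same route as the paper: specialize the displayed identity $S_1(\omega;q)+V_1(\omega;q)=\frac{j(-\omega q;q^2)}{J_1}(1-\omega g_3(-\omega,q^2))-\frac{J_{1,2}^2}{j(-\omega;q)}$ at $\omega=1$, kill the $g_3$ term using the vanishing of $\overline{J}_{1,2}/J_1=(-q;q^2)_\infty^2(-q;q)_\infty$ against the bounded Eulerian form $\tfrac12\sum q^{2n^2}/(-q^2;q^2)_n^2$, and observe that $V_1(1;q)$ terminates at $n=(k-1)/2$ because of the factor $1+\zeta^k=0$ in $(-\zeta;\zeta^2)_n$. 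Your identification of the termination index and the nonvanishing of the denominators matches what the paper leaves implicit, and your deferral of the boundedness of the Eulerian form to the \cite{BKLR}-style argument is exactly the paper's stance.
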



\subsection{Example 2} We define
\begin{equation}
S_2(\omega;q):=(1+\omega^{-1})\sum_{n= 0}^{\infty}\frac{q^{n+1}(-q)_{2n}}{(q\omega,q/\omega;q^2)_{n+1}},
\end{equation}
\begin{equation}
V_2(\omega;q):=\Big ( 1+\omega^{-1}\Big )\sum_{n=0}^{\infty}\frac{(\omega q,q/\omega;q^2)_nq^{2n+1}}{(-q)_{2n+1}}.
\end{equation}
By identity (\ref{equation:RLNid1}) of  Proposition \ref{proposition:eulerian-mxqz-prop}:
\begin{equation}
S_2(\omega;q)=-m(\omega,q^2,q).
\end{equation}
By \cite[Theorem $4.3$]{Mo1}:
\begin{align}
V_2(\omega;q)=2m(\omega,q^2,-1)-\frac{j(\omega q;q^2)}{\overline{J}_{1,4}}m(\omega,q,-1)
 -\frac{J_{1,2}^2}{\overline{J}_{0,1}}\frac{j(\omega q;q^2)}{j(-\omega;q)}.
\end{align}
Using (\ref{equation:m-change-z}) gives
\begin{align}
2S_2(\omega;q)+V_2(\omega;q)&=-\frac{j(\omega q;q^2)}{\overline{J}_{1,4}}m(\omega,q,-1)
+\frac{2J_2^3\overline{J}_{1,2}j(-q\omega;q^2)}{J_{1,2}\overline{J}_{0,2}j(q\omega;q^2)j(-\omega;q^2)}\label{equation:id4-pre}\\
&\ \ \ \ \ -\frac{J_{1,2}^2}{\overline{J}_{0,1}}\frac{j(\omega q;q^2)}{j(-\omega;q)}.\notag
\end{align}
Setting $\omega=1$ in (\ref{equation:id4-pre}), using (\ref{equation:RLNid2}) with $x=1$, and noting $J_{1,2}/\overline{J}_{1,4}=(q;q^2)_{\infty}^3$ gives
\begin{theorem} If $\zeta$ is a primitive order $2k+1$ root of unity, then, as $q$ approaches $\zeta$ radially within the unit disk, we have that
\begin{align}
\lim_{q\rightarrow \zeta}\Big ( 2S_2(1;q)-\frac{\overline{J}_{1,2}^3}{2J_{1,4}^2}\Big)=-2\sum_{n=0}^{k}\frac{(\zeta,\zeta^2)_n^2\zeta^{2n+1}}{(-\zeta;\zeta)_{2n+1}}.
\end{align}
\end{theorem}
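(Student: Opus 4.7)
The argument follows the template of the preceding two sections. I begin with the master identity~(\ref{equation:id4-pre}), specializing $\omega=1$ and using $j(q;q^2)=J_{1,2}$, $j(-q;q^2)=\overline{J}_{1,2}$, $j(-1;q^2)=\overline{J}_{0,2}$, and $j(-1;q)=\overline{J}_{0,1}$ to obtain
\begin{equation*}
2S_2(1;q)+V_2(1;q)=-\frac{J_{1,2}}{\overline{J}_{1,4}}\,m(1,q,-1)+\frac{2J_2^3\overline{J}_{1,2}^2}{J_{1,2}^2\overline{J}_{0,2}^2}-\frac{J_{1,2}^3}{\overline{J}_{0,1}^2}.
\end{equation*}

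Next, I use~(\ref{equation:RLNid2}) with $x=1$ to eliminate $m(1,q,-1)$ in favor of an Eulerian sum minus $J_{1,2}^2/(2\overline{J}_{0,1})$. Rearranging so that $2S_2(1;q)-\frac{\overline{J}_{1,2}^3}{2J_{1,4}^2}$ is isolated on the left-hand side yields
\begin{equation*}
2S_2(1;q)-\frac{\overline{J}_{1,2}^3}{2J_{1,4}^2}=-V_2(1;q)-\frac{J_{1,2}}{\overline{J}_{1,4}}\sum_{n=0}^{\infty}\frac{(-1)^n q^{n^2}(q;q^2)_n}{(-1;q^2)_{n+1}(-q^2;q^2)_n}+\Theta(q),
\end{equation*}
where
\begin{equation*}
\Theta(q):=\frac{J_{1,2}^3}{2\overline{J}_{1,4}\,\overline{J}_{0,1}}+\frac{2J_2^3\overline{J}_{1,2}^2}{J_{1,2}^2\overline{J}_{0,2}^2}-\frac{J_{1,2}^3}{\overline{J}_{0,1}^2}-\frac{\overline{J}_{1,2}^3}{2J_{1,4}^2}.
\end{equation*}

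Let $\zeta$ be a primitive $(2k+1)$-th root of unity. Since $2k+1$ is odd, the prefactor $J_{1,2}/\overline{J}_{1,4}=(q;q^2)_\infty^3$ vanishes at $\zeta$. The Eulerian sum itself terminates at $n=k$, because $(q;q^2)_{k+1}$ contains the factor $1-\zeta^{2k+1}=0$; meanwhile its denominator $(-1;\zeta^2)_{n+1}(-\zeta^2;\zeta^2)_n$ is nonzero, as the equation $\zeta^{2i}=-1$ has no solution when $\zeta$ has odd order. Hence the Eulerian contribution drops out of the radial limit. The same truncation argument applied to $V_2(1;q)=2\sum_{n\ge 0}\frac{(q;q^2)_n^2 q^{2n+1}}{(-q;q)_{2n+1}}$ produces the claimed finite sum $2\sum_{n=0}^{k}\frac{(\zeta;\zeta^2)_n^2\zeta^{2n+1}}{(-\zeta;\zeta)_{2n+1}}$ in the limit.

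The main obstacle is verifying $\Theta(q)\to 0$ as $q\to\zeta$. My plan is to reduce each theta quotient to a common form via the elementary identities $\overline{J}_{0,1}=2J_2^2/J_1$, $\overline{J}_{0,2}=2J_4^2/J_2$, $J_{1,4}^2=J_{1,2}J_4^2/J_2$, and $\overline{J}_{1,4}^2=\overline{J}_{1,2}J_4^2/J_2$, converting the claim into a polynomial relation among $J_1,J_2,J_4,J_{1,2},\overline{J}_{1,2}$ that should follow from Jacobi and quintuple triple-product manipulations. If no identity-in-$q$ emerges globally, one may instead analyze the radial asymptotics of each theta quotient separately at the $(2k+1)$-th roots of unity and exhibit the required cancellation there. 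Once $\Theta(q)\to 0$ is established, the theorem follows directly from the limit analysis of the previous paragraph.
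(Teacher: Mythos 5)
Your proposal is correct and follows the paper's (very terse) proof exactly: specialize (\ref{equation:id4-pre}) at $\omega=1$, eliminate $m(1,q,-1)$ via (\ref{equation:RLNid2}) with $x=1$, observe that the prefactor $J_{1,2}/\overline{J}_{1,4}=(q;q^2)_{\infty}^3$ kills the Eulerian term at odd-order roots of unity, and let $V_2(1;q)$ terminate at $n=k$. The step you flag as the main obstacle is not one: the identities you list already give $\overline{J}_{0,1}=2J_2^2/J_1=2\overline{J}_{1,4}$, hence $\frac{J_{1,2}^3}{2\overline{J}_{1,4}\,\overline{J}_{0,1}}=\frac{J_{1,2}^3}{\overline{J}_{0,1}^2}=\frac{J_1^8}{4J_2^7}$ and likewise $\frac{2J_2^3\overline{J}_{1,2}^2}{J_{1,2}^2\overline{J}_{0,2}^2}=\frac{\overline{J}_{1,2}^3}{2J_{1,4}^2}=\frac{J_2^{17}}{2J_1^8J_4^8}$, so $\Theta(q)\equiv 0$ identically in $q$ by term-by-term cancellation and no separate asymptotic analysis at roots of unity is needed.
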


\subsection{Example 3} We define
\begin{equation}
S_3(\omega;q):=\sum_{n=0}^{\infty}\frac{(q;q^2)_{n}(-1)^nq^{n^2}}{(-\omega;q^2)_{n+1}(-q^2/\omega;q^2)_{n}},
\end{equation}
\begin{equation}
V_3(\omega;q):=\Big ( 1+\omega^{-1}\Big)\sum_{n=0}^{\infty}\frac{q^{2n+2}(-\omega q^2,-q^2/\omega;q^2)_{n}}{(q;q^2)_{n+1}}.
\end{equation}
By identity (\ref{equation:RLNid2}) of  Proposition \ref{proposition:eulerian-mxqz-prop}:
\begin{equation}
S_3(\omega;q)= m(\omega,q,-1)+\frac{J_{1,2}^2}{2j(-\omega;q)}.\label{equation:6.3.9-dual-final}
\end{equation}
By \cite[$(4.29)$]{Mo1}:
\begin{equation}
V_3(\omega;q)=-m(\omega,q,-1)+\frac{j(-\omega;q^2)}{J_1}\frac{q}{\omega}g_3(-\omega q,q^2)+\frac{1}{2}\frac{J_{1,2}^2}{j(-\omega;q)}.
\end{equation}
Thus
\begin{equation}
S_3(\omega;q)+V_3(\omega;q)=\frac{j(-\omega;q^2)}{J_1}\frac{q}{\omega}g_3(-\omega q,q^2)+\frac{J_{1,2}^2}{j(-\omega;q)}.
\end{equation}
Noting that \cite[$(4.8)$]{HM}
\begin{equation}
g_3(-q,q^2)=\sum_{n=0}^{\infty}\frac{q^{2n(n+1)}}{(-q;q^2)_{n+1}^2},
\end{equation}
and $\overline{J}_{0,2}/J_1=2(-q^2;q^2)_{\infty}^2(-q;q)_{\infty}$ it follows that
\begin{theorem} If $\zeta$ is a primitive even order $4k$ root of unity, then, as $q$ approaches $\zeta$ radially within the unit disk, we have that
\begin{equation}
\lim_{q\rightarrow \zeta} \Big ( S_3(1;q)-\frac{J_{1,2}^2}{\overline{J}_{0,1}}\Big )=-2\sum_{n=0}^{k-1}\frac{\zeta^{2n+2}(-\zeta^2;\zeta^2)_n^2}{(\zeta;\zeta^2)_{n+1}}.
\end{equation}
\end{theorem}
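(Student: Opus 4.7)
The plan is to start from the Eulerian identity stated just before the theorem, namely
\begin{equation*}
S_3(\omega;q)+V_3(\omega;q)=\frac{j(-\omega;q^2)}{J_1}\frac{q}{\omega}g_3(-\omega q,q^2)+\frac{J_{1,2}^2}{j(-\omega;q)},
\end{equation*}
and to specialize to $\omega=1$. In direct parallel with the proof of Theorem \ref{theorem:radial-new} and with Example 1, I would isolate the ``tame'' part of $S_3(1;q)$ by rearranging to
\begin{equation*}
S_3(1;q)-\frac{J_{1,2}^2}{\overline{J}_{0,1}}=\frac{\overline{J}_{0,2}}{J_1}\,q\,g_3(-q,q^2)-V_3(1;q),
\end{equation*}
and then analyze each summand on the right as $q\to\zeta$ radially from inside the unit disk.

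For the first summand I would invoke the factorization $\overline{J}_{0,2}/J_1=2(-q^2;q^2)_{\infty}^2(-q;q)_{\infty}$ already supplied in the subsection. If $\zeta$ is a primitive $4k$-th root of unity, then $\zeta^{2k}=-1$, so the factor $(1+q^{2k})$ inside $(-q;q)_{\infty}$ kills the whole prefactor at $\zeta$. Granting that $g_3(-q,q^2)$ stays bounded as $q$ approaches $\zeta$ radially, this entire summand contributes $0$ to the limit.

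For the second summand I would show that $V_3(1;q)=2\sum_{n\ge 0}\frac{q^{2n+2}(-q^2;q^2)_n^2}{(q;q^2)_{n+1}}$ terminates at $\zeta$. Indeed, $(-\zeta^2;\zeta^2)_n^2=\prod_{i=1}^n(1+\zeta^{2i})^2$ contains the zero factor $(1+\zeta^{2k})^2$ as soon as $n\ge k$, while $(\zeta;\zeta^2)_{n+1}=\prod_{i=0}^n(1-\zeta^{2i+1})$ never vanishes because $\zeta^{2i+1}=1$ would force $4k\mid 2i+1$, which is impossible on parity grounds. Consequently
\begin{equation*}
\lim_{q\to\zeta}V_3(1;q)=2\sum_{n=0}^{k-1}\frac{\zeta^{2n+2}(-\zeta^2;\zeta^2)_n^2}{(\zeta;\zeta^2)_{n+1}},
\end{equation*}
and after the sign this matches the claimed right-hand side exactly.

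The one step that is not merely bookkeeping is the rigorous justification that $g_3(-q,q^2)$ admits a finite radial limit at $\zeta$, since the defining Eulerian sum is not absolutely convergent on the unit circle. As the author flags at the start of Section \ref{section:radials}, this kind of boundedness is handled in the style of \cite{BKLR}, for instance by passing from the Eulerian form of $g_3$ to an Appell--Lerch representation whose $m(\,\cdot\,,q,\,\cdot\,)$ factors have controllable radial asymptotics; multiplication by the vanishing $\overline{J}_{0,2}/J_1$ then annihilates the contribution, completing the argument.
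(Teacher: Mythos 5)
Your proof follows the paper's own route exactly: specialize the identity $S_3(\omega;q)+V_3(\omega;q)=\frac{j(-\omega;q^2)}{J_1}\frac{q}{\omega}g_3(-\omega q,q^2)+\frac{J_{1,2}^2}{j(-\omega;q)}$ at $\omega=1$, let the vanishing factor $\overline{J}_{0,2}/J_1=2(-q^2;q^2)_{\infty}^2(-q;q)_{\infty}$ (which contains $1+q^{2k}\to 1+\zeta^{2k}=0$) annihilate the $g_3(-q,q^2)$ term, and observe that $V_3(1;q)$ terminates at $n=k-1$ because $(-\zeta^2;\zeta^2)_n$ picks up the factor $1+\zeta^{2k}=0$ while $(\zeta;\zeta^2)_{n+1}$ never vanishes. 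The one step you flag as nontrivial --- the finiteness of the Eulerian form $g_3(-q,q^2)=\sum_{n\ge 0}q^{2n(n+1)}/(-q;q^2)_{n+1}^2$ at $\zeta$ --- is precisely the step the paper explicitly omits with a pointer to \cite{BKLR}, so your treatment matches the paper's in both structure and level of detail.
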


\subsection{Example 4}
We define
\begin{equation}
S_4(\omega;q):=\sum_{n= 0}^{\infty}{}^*\frac{(-1)^n(q;q^2)_n}{(-\omega;q)_{n+1}(-q/\omega;q)_n},
\end{equation}
\begin{equation}
V_4(\omega;q):=\Big ( 1+\frac{1}{\omega}\Big )\sum_{n=0}^{\infty}\frac{(-\omega q,-q/\omega;q)_nq^{n+1}}{(q;q^2)_{n+1}}.
\end{equation}
By identity (\ref{equation:RLNid3}) of  Proposition \ref{proposition:eulerian-mxqz-prop}:
\begin{equation}
S_4(\omega;q)=m(\omega,q,-1).
\end{equation}
By \cite[Theorem $4.4$]{Mo1}:
\begin{equation}
V_4(\omega;q)=-m(\omega,q,-1)+\frac{j(-\omega;q)}{J_{1,2}}m(\omega^2,q^2,-1) -\omega \frac{J_4^3}{J_2^3}\frac{j(\omega ;q)j(q\omega^2;q^2)}{j(\omega^4;q^4)}.
\end{equation}
Thus
\begin{equation}
S_4(\omega;q)+V_4(\omega;q)=\frac{j(-\omega;q)}{J_{1,2}}m(\omega^2,q^2,-1) -\omega \frac{J_1^2J_4^2}{J_2^2}\frac{j(q\omega^2;q^2)}{j(-\omega;q)j(-\omega^2;q^2)}.
\end{equation}
Using (\ref{equation:RLNid1}) with $x=1$ and (\ref{equation:m-change-z}) it is straightforward to show
\begin{theorem} If $\zeta$ is a primitive even order $2k$ root of unity, then, as $q$ approaches $\zeta$ radially within the unit disk, we have that
\begin{equation}
\lim_{q\rightarrow \zeta} \Big ( S_4(1;q)+\frac{J_{1}^3}{\overline{J}_{0,1}^2}
-\frac{\overline{J}_{1,2}^4}{2J_1^3}\Big )=-2\sum_{n=0}^{k-1}\frac{\zeta^{n+1}(-\zeta;\zeta)_n^2}{(\zeta;\zeta^2)_{n+1}}.
\end{equation}
\end{theorem}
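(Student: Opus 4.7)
The plan is to mimic the approach used in the previous three examples of Section \ref{section:radials}, starting from the stated identity
\[
S_4(\omega;q)+V_4(\omega;q)=\frac{j(-\omega;q)}{J_{1,2}}m(\omega^2,q^2,-1)-\omega\,\frac{J_1^2J_4^2}{J_2^2}\cdot\frac{j(q\omega^2;q^2)}{j(-\omega;q)j(-\omega^2;q^2)}.
\]
Specializing $\omega=1$ collapses the theta symbols and gives
\[
S_4(1;q)+V_4(1;q)=\frac{\overline{J}_{0,1}}{J_{1,2}}m(1,q^2,-1)-\frac{J_1^2J_4^2J_{1,2}}{J_2^2\overline{J}_{0,1}\overline{J}_{0,2}}.
\]

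Next I would convert the Appell--Lerch sum to an Eulerian form. Applying (\ref{equation:m-change-z}) with $x=1$, $q\mapsto q^2$, $z_0=-1$, $z_1=q$ yields
\[
m(1,q^2,-1)=m(1,q^2,q)+\frac{J_2^3\overline{J}_{1,2}^2}{\overline{J}_{0,2}^2 J_{1,2}^2},
\]
and then (\ref{equation:RLNid1}) with $x=1$ rewrites
\[
m(1,q^2,q)=-2\sum_{n=0}^{\infty}\frac{q^{n+1}(-q;q)_{2n}}{(q;q^2)_{n+1}^2}.
\]
Substituting back and transposing $V_4(1;q)$ isolates $S_4(1;q)$ against an Eulerian series (multiplied by $\overline{J}_{0,1}/J_{1,2}$) plus two pure theta quotients.

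Passing to the radial limit $q\to\zeta$ then hinges on three observations. First, $V_4(1;q)$ terminates at $n=k-1$, since $(-q;q)_n$ acquires the vanishing factor $1+q^k=0$ for $n\geq k$, producing exactly the sum $2\sum_{n=0}^{k-1}\zeta^{n+1}(-\zeta;\zeta)_n^2/(\zeta;\zeta^2)_{n+1}$ whose negative appears on the right-hand side of the theorem. Second, using $\overline{J}_{0,1}=2J_2/J_{1,2}$, the coefficient $\overline{J}_{0,1}/J_{1,2}$ is killed at $\zeta$ by the vanishing of $J_2$, and since the Eulerian expression for $m(1,q^2,q)$ tends to a finite value (the sum truncates through $(-q;q)_{2n}$), the whole product drops out in the limit. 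Third, the residual theta quotients must collapse to $-J_1^3/\overline{J}_{0,1}^2 + \overline{J}_{1,2}^4/(2J_1^3)$, the correction already sitting on the left-hand side of the theorem.

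The main obstacle is this last step: verifying the pure theta-function identity
\[
\frac{\overline{J}_{0,1}J_2^3\overline{J}_{1,2}^2}{J_{1,2}^3\overline{J}_{0,2}^2}-\frac{J_1^2J_4^2J_{1,2}}{J_2^2\overline{J}_{0,1}\overline{J}_{0,2}}+\frac{J_1^3}{\overline{J}_{0,1}^2}-\frac{\overline{J}_{1,2}^4}{2J_1^3}=0.
\]
I would handle this by rewriting all of the symbols in the common basis $\{J_{1,2},J_{2,4},J_4\}$ via $\overline{J}_{0,1}=2J_2/J_{1,2}$, $\overline{J}_{0,2}=2J_4/J_{2,4}$, $\overline{J}_{1,2}=J_{2,4}^3 J_4/J_{1,2}^2$, $J_1=J_{1,2}J_2$, and $J_2=J_{2,4}J_4$. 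Since both sides of the difference involve only finitely many eta-quotient monomials after this reduction, the verification is routine but tedious, analogous to the theta manipulations appearing implicitly in the proofs of Theorems~4.1, 4.2, and 4.3.
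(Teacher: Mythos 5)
Your proposal takes exactly the route the paper intends (its entire proof is the sentence ``Using (\ref{equation:RLNid1}) with $x=1$ and (\ref{equation:m-change-z}) it is straightforward to show'' the theorem): specialize $\omega=1$, trade $m(1,q^2,-1)$ for $m(1,q^2,q)$ plus a theta quotient via (\ref{equation:m-change-z}), convert $m(1,q^2,q)$ into the Eulerian form of (\ref{equation:RLNid1}), observe that $V_4(1;q)=2\sum_{n\ge0}(-q;q)_n^2q^{n+1}/(q;q^2)_{n+1}$ terminates at $n=k-1$, that the prefactor $\overline{J}_{0,1}/J_{1,2}$ kills the remaining Appell--Lerch piece, and finally verify a pure theta identity. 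The skeleton is right, the correction term from (\ref{equation:m-change-z}) is computed correctly, and the theta identity you isolate is in fact true.

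However, the step you defer as ``routine but tedious'' would fail as set up, because essentially every product formula you quote is wrong: the correct relations are $\overline{J}_{0,1}=2J_2^2/J_1$ (not $2J_2/J_{1,2}$), $\overline{J}_{0,2}=2J_4^2/J_2$, $\overline{J}_{1,2}=J_2^5/(J_1^2J_4^2)$, $J_{1,2}=J_1^2/J_2$ and $J_{2,4}=J_2^2/J_4$ (so $J_1\neq J_{1,2}J_2$ and $J_2\neq J_{2,4}J_4$). With the corrected dictionary the identity closes at once, and in fact term by term: $\overline{J}_{0,1}J_2^3\overline{J}_{1,2}^2/(J_{1,2}^3\overline{J}_{0,2}^2)=J_2^{20}/(2J_1^{11}J_4^8)=\overline{J}_{1,2}^4/(2J_1^3)$ and $J_1^2J_4^2J_{1,2}/(J_2^2\overline{J}_{0,1}\overline{J}_{0,2})=J_1^5/(4J_2^4)=J_1^3/\overline{J}_{0,1}^2$, so no further cancellation is needed. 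Similarly, the prefactor is not ``killed by the vanishing of $J_2$'': the denominator $J_{1,2}=J_1^2/J_2$ contains $J_1$, which also vanishes at $\zeta$. The correct observation is $\overline{J}_{0,1}/J_{1,2}=2J_2^3/J_1^3=2(-q;q)_{\infty}^3$, whose factors $1+q^{(2j+1)k}$ tend to zero, while the Eulerian sum it multiplies truncates (since $(-q;q)_{2n}$ vanishes once $2n\ge k$) and stays bounded. These are repairable slips rather than a wrong approach, but as written the concluding verification does not go through.
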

\section{Gordon and McIntosh's universal mock theta function}\label{section:GM}

We recall a universal mock theta function of Gordon and McIntosh \cite[$(4.11)$]{GM}:
\begin{equation}
g_2(x,q):=\sum_{n=0}^{\infty}\frac{q^{\tfrac{1}{2}n(n+1)}(-q;q)_n}{(x;q)_{n+1}(q/x)_{n+1}},
\end{equation}
with \cite[$(4.7)$]{GM} and \cite[$(4.10)$]{GM}:
\begin{equation}
g_2(x;q)=\frac{J_2^3}{J_{1,2}j(x^2;q^2)}-xq^{-1}m(x^2q^{-1},q^2,q).\label{equation:g2-mxqz}
\end{equation}
We follow the methods of \cite{Mo1}.  The tail of the bilateral series can be written
\begin{equation}
\sum_{n=0}^{\infty}\frac{1}{2}\frac{q^n(q/x;q)_n(x;q)_n}{(-q;q)_n}:=f(x).\label{equation:g2tail}
\end{equation}
If we make the ansatz
\begin{equation}
f(x)=xq^{-1}m(x^2q^{-1},q^2,q)
\end{equation}
such that the $m(x,q,z)$ expressions cancel upon addition of (\ref{equation:g2-mxqz}) and (\ref{equation:g2tail}), we would expect the functional equation to be
\begin{equation}
f(qx)+x^2f(x)-x=0,
\end{equation}
but instead we find
\begin{equation}
f(qx)+x^2f(x)-x=(1-x)\frac{j(x;q)}{J_2}.
\end{equation}
We solve for $f(x)$:
\begin{equation}
f(x)=x^{-1}+(x^{-2}-x^{-1})\frac{j(x;q)}{J_2}-x^{-2}f(qx).\label{equation:f-fnq}
\end{equation}
Iterating (\ref{equation:f-fnq}) and using the heuristic \cite[$(2.5)$]{HM}, suggests
\begin{equation}
f(x)=x^{-1}m(x^{-2}q,q^2,z)-\frac{j(x;q)}{2J_2}g_2(-x;q)+theta,
\end{equation}
where $z$ is arbitrary.  Some numerical work quickly leads to
\begin{equation}
f(x)=xq^{-1}m(x^{2}q^{-1},q^2,-1)-\frac{j(x;q)}{2J_2}g_2(-x;q)-\frac{1}{2x}\frac{J_{2,4}^2j(x;q)}{j(-x;q)j(-qx^2;q^2)}.\label{equation:gm-mixed}
\end{equation}
\begin{theorem} \label{theorem:gm-mixed} Identity (\ref{equation:gm-mixed}) is true.
\end{theorem}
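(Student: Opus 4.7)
The plan is to verify (\ref{equation:gm-mixed}) by the $q$-difference equation method underlying its derivation. Let $F(x)$ denote the right-hand side of (\ref{equation:gm-mixed}). The discussion preceding the statement already establishes that the series $f(x)$ defined in (\ref{equation:g2tail}) satisfies
\begin{equation*}
f(qx)+x^2 f(x)-x=(1-x)\frac{j(x;q)}{J_2}.
\end{equation*}
My plan is therefore (i) to verify that $F(x)$ satisfies the same inhomogeneous $q$-difference equation in $x$, and (ii) to pin down the resulting homogeneous ambiguity by matching values at a single convenient point.

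For step (i), I would compute $F(qx)+x^2 F(x)$ one summand at a time. The Appell--Lerch piece transforms via (\ref{equation:m-fnq-x}) with base $q^2$ and argument $x^2q^{-1}$, yielding $m(x^2 q,q^2,-1)=1-x^2q^{-1}m(x^2q^{-1},q^2,-1)$, so these contributions collapse to the constant $x$. The middle summand $-\frac{j(x;q)}{2J_2}g_2(-x;q)$ is handled by combining $j(qx;q)=-x^{-1}j(x;q)$ with a functional equation for $g_2(-x;q)$, which one derives by substituting (\ref{equation:g2-mxqz}), applying (\ref{equation:m-fnq-x}) once more, and using (\ref{equation:m-change-z}) to pass between $m(\cdot,q^2,q)$ and $m(\cdot,q^2,-1)$. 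The third summand transforms by the same principles, using the quasi-periodicity of $j(-qx^2;q^2)$. What remains after collection is a purely theta-theoretic identity asserting that the leftover theta terms equal $(1-x)j(x;q)/J_2$; this is verified via the standard uniqueness principle for theta functions, since both sides share the same quasi-periodicity under $x\mapsto qx$, the same zero locus, and the same value at a generic reference point.

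For step (ii), the difference $h(x):=f(x)-F(x)$ then satisfies the homogeneous equation $h(qx)=-x^2 h(x)$. The apparent poles of $F$ at $x=\pm q^{-n}$ coming from $j(-x;q)^{-1}$ and $j(-qx^2;q^2)^{-1}$ must cancel against zeros of $j(x;q)$ in the numerators, so $h$ is holomorphic on $\C^*$ and forced into a one-dimensional space of theta quotients with the prescribed quasi-periodicity. Evaluating at $x=1$ is particularly clean: $f(1)=\tfrac{1}{2}$, since only the $n=0$ term of (\ref{equation:g2tail}) survives; and $F(1)=q^{-1}m(q^{-1},q^2,-1)=\tfrac{1}{2}$, because (\ref{equation:m-fnq-flip}) gives $m(q,q^2,-1)=q^{-1}m(q^{-1},q^2,-1)$, which combined with (\ref{equation:m-fnq-x}) yields $m(q^{-1},q^2,-1)=q/2$, while the remaining two summands of $F(1)$ vanish because $j(1;q)=0$. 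This single comparison forces $h\equiv 0$. The main obstacle will be the theta identity at the heart of step (i): the third summand $-\frac{1}{2x}\frac{J_{2,4}^2 j(x;q)}{j(-x;q)j(-qx^2;q^2)}$ was engineered precisely to produce the inhomogeneous term $(1-x)j(x;q)/J_2$, and while no ingredient beyond the propositions on $m(x,q,z)$ and Jacobi's triple product is needed, the bookkeeping of quasi-periodicities, zeros, and residues must be carried out carefully.
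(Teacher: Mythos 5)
Your route (a $q$-difference equation in $x$ followed by a Liouville-type uniqueness argument) is completely different from the paper's, which writes $2J_2f(x)$ as a combination of Hecke-type double sums $f_{1,3,6}$ via Bailey pairs and then applies the general Appell--Lerch expansion of \cite{Mo2}. The problem is that your step (i) does not close, and the obstruction is structural, not bookkeeping. Consider the middle summand $G(x):=-\frac{j(x;q)}{2J_2}g_2(-x;q)$. From (\ref{equation:g2-mxqz}) together with (\ref{equation:m-fnq-x}) (base $q^2$) one obtains the \emph{forced} relation $g_2(qy;q)=-y^2g_2(y;q)-y$, hence $g_2(-qx;q)=-x^2g_2(-x;q)+x$; combined with $j(qx;q)=-x^{-1}j(x;q)$ this gives
\begin{equation*}
G(qx)+x^2G(x)=-\bigl(x+x^2\bigr)\frac{j(x;q)}{2J_2}\,g_2(-x;q)+\frac{j(x;q)}{2J_2}.
\end{equation*}
The first term is a nonvanishing theta multiple of the genuinely mock function $g_2(-x;q)$, so what remains after collection is \emph{not} ``a purely theta-theoretic identity'': no manipulation of the third summand (a pure theta quotient) can absorb it. For the mock parts to cancel under $G\mapsto G(qx)+x^2G(x)$, the coefficient of $g_2(-x;q)$ would have to be invariant under $x\mapsto qx$, which $j(x;q)/2J_2$ is not. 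Since the Appell--Lerch summand contributes exactly $x$ (your computation there is correct) and $f(qx)+x^2f(x)-x$ is a theta function (it equals $(1-x)j(x;q)/(2J_2)$ for the $f$ of (\ref{equation:g2tail}); note the displayed functional equation before (\ref{equation:f-fnq}) is off by a factor of $2$), your own method, carried out honestly, shows the three printed summands cannot satisfy the required difference equation. So the strategy cannot verify (\ref{equation:gm-mixed}) as it stands; at best it exposes a tension that would have to be resolved before any proof along these lines could proceed.

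There are two further gaps in step (ii) even granting step (i). First, the space of functions holomorphic on $\C^*$ satisfying $h(qx)=-x^2h(x)$ is two-dimensional (writing $h=\sum c_nx^n$ forces $c_n=-q^{-n}c_{n-2}$, leaving $c_0$ and $c_1$ free), so the single evaluation at $x=1$ does not force $h\equiv 0$; you need a second independent condition. Second, holomorphy of $h$ is not automatic: $g_2(-x;q)$ has poles at $x=-q^n$, which are \emph{not} cancelled by the numerator $j(x;q)$ (whose zeros lie at $x=q^n$), and the Appell--Lerch summand has poles at $x^2=-q^{2r-3}$; these must be matched, residue by residue, against the poles of the third summand coming from $j(-x;q)$ and $j(-qx^2;q^2)$. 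That residue analysis is a substantial computation you have only gestured at. By contrast, the paper's (admittedly terse) proof sidesteps all of this by moving to Hecke-type double sums, where the mock and theta parts are separated by the machinery of \cite{Mo2} and \cite{HM}.
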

\noindent Adding (\ref{equation:g2-mxqz}) and (\ref{equation:gm-mixed}) and using (\ref{equation:m-change-z}), our mixed mock modular bilateral $q$-series for $g_2(x,q)$ is thus
\begin{corollary} \label{corollary:master} We have
\begin{align}
\sum_{n=0}^{\infty}&\frac{q^{\tfrac{1}{2}n(n+1)}(-q;q)_n}{(x;q)_{n+1}(q/x)_{n+1}}+\sum_{n=0}^{\infty}\frac{1}{2}\frac{q^n(q/x;q)_n(x;q)_n}{(-q;q)_n}=-\frac{j(x;q)}{2J_2}g_2(-x;q)\\
&+\frac{J_2^3}{J_{1,2}j(x^2;q^2)}+\frac{1}{2x}\frac{J_{2}^{10}j(-x^2;q^2)}{J_{1}^4J_{4}^4j(x^2;q^2)j(-qx^2;q^2)}-\frac{1}{2x}\frac{J_{2,4}^2j(x;q)}{j(-x;q)j(-qx^2;q^2)}.\notag
\end{align}
\end{corollary}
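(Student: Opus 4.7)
The plan is to deduce Corollary \ref{corollary:master} by adding the two closed-form expressions already in hand and then collapsing the resulting difference of Appell--Lerch sums into a single theta quotient via the change-of-$z$ identity (\ref{equation:m-change-z}).

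The first step is to recognize that the left-hand side of the corollary is literally $g_2(x,q)+f(x)$: the first summand on the left is the defining series of $g_2(x,q)$ given at the start of Section \ref{section:GM}, and the second summand is the tail $f(x)$ introduced in (\ref{equation:g2tail}). Adding the Appell--Lerch representation (\ref{equation:g2-mxqz}) for $g_2(x,q)$ to Theorem \ref{theorem:gm-mixed}'s formula (\ref{equation:gm-mixed}) for $f(x)$ produces on the right the three theta quotients $\frac{J_2^3}{J_{1,2}j(x^2;q^2)}$, $-\frac{j(x;q)}{2J_2}g_2(-x;q)$, and $-\frac{1}{2x}\frac{J_{2,4}^2 j(x;q)}{j(-x;q)j(-qx^2;q^2)}$, each of which already appears verbatim in the corollary, together with the leftover Appell--Lerch contribution $xq^{-1}\bigl[m(x^2q^{-1},q^2,-1)-m(x^2q^{-1},q^2,q)\bigr]$. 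Everything thus reduces to showing that this bracketed difference, scaled by $xq^{-1}$, equals the middle term $\frac{1}{2x}\frac{J_2^{10}\,j(-x^2;q^2)}{J_1^4 J_4^4\,j(x^2;q^2)\,j(-qx^2;q^2)}$ of the corollary.

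The second step is to apply (\ref{equation:m-change-z}) with base $q^2$, argument $x^2 q^{-1}$, and $z_0=q$, $z_1=-1$. This rewrites the $m$-difference as a single theta quotient with numerator $q J_2^3\, j(-q^{-1};q^2)\, j(-x^2;q^2)$ and denominator $j(q;q^2)\, j(-1;q^2)\, j(x^2;q^2)\, j(-x^2 q^{-1};q^2)$. Using the elementary theta identities $j(q^2 y;q^2)=-y^{-1}j(y;q^2)$ and $j(q^2/y;q^2)=j(y;q^2)$, I would simplify $j(-q^{-1};q^2)=q^{-1}\overline{J}_{1,2}$ and $j(-x^2 q^{-1};q^2)=(x^2/q)\,j(-qx^2;q^2)$, together with $j(q;q^2)=J_{1,2}$ and $j(-1;q^2)=\overline{J}_{0,2}$. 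After multiplication by $xq^{-1}$, the entire block collapses to $\frac{1}{x}\cdot\frac{J_2^3\,\overline{J}_{1,2}}{J_{1,2}\,\overline{J}_{0,2}}\cdot\frac{j(-x^2;q^2)}{j(x^2;q^2)\,j(-qx^2;q^2)}$.

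The final step is to check $\frac{J_2^3\,\overline{J}_{1,2}}{J_{1,2}\,\overline{J}_{0,2}}=\frac{J_2^{10}}{2\,J_1^4\,J_4^4}$, which follows from the standard product conversions $J_{1,2}=J_1^2/J_2$, $\overline{J}_{1,2}=J_2^5/(J_1^2 J_4^2)$, and $\overline{J}_{0,2}=2J_4^2/J_2$. This matches exactly the middle quotient on the right-hand side of the corollary and completes the derivation. The main obstacle here is not conceptual but bookkeeping: correctly handling the shift $j(-x^2 q^{-1};q^2)\to j(-qx^2;q^2)$, tracking the associated power of $q$ that migrates out of the theta function, and cleanly reducing the mixed $J_{a,m}$/$\overline{J}_{a,m}$ ratio to the $J_1,J_2,J_4$ form used in the statement of Corollary \ref{corollary:master}.
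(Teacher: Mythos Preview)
Your proposal is correct and follows exactly the approach indicated in the paper: add (\ref{equation:g2-mxqz}) and (\ref{equation:gm-mixed}), then collapse the resulting difference $m(x^2q^{-1},q^2,-1)-m(x^2q^{-1},q^2,q)$ via (\ref{equation:m-change-z}). Your theta bookkeeping (the shifts $j(-q^{-1};q^2)=q^{-1}\overline{J}_{1,2}$, $j(-x^2q^{-1};q^2)=x^2q^{-1}j(-qx^2;q^2)$, and the reduction $J_2^3\overline{J}_{1,2}/(J_{1,2}\overline{J}_{0,2})=J_2^{10}/(2J_1^4J_4^4)$) is accurate.
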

\begin{proof}[Proof of Theorem \ref{theorem:gm-mixed}:]   The proof is straightforward but lengthy, so we give a quick sketch.  Using Bailey pair techniques, see for example Lovejoy \cite{L}
\begin{equation}
2J_2f(x)=f_{1,3,6}(xq,q^5,q)+qf_{1,3,6}(xq^2,q^7,q).
\end{equation}
Using the $f_{1,3,6}(x,y,q)$ specialisation of the main result of \cite{Mo2} and doing some calculating similar to the many examples in \cite{HM} yields the result.  
\end{proof}
Corollary \ref{corollary:master} allows us to compute the radial limits of second, sixth, eighth and tenth order mock theta functions that can be expressed in terms of $g_2(x,q)$, e.g. \cite[$(5.2)$, $(5.10)$, $(5.18)$]{GM}.    For examples, we recall the second order $B_2(q)$ from \cite[$(5.2)$]{GM}
\begin{equation}
B_2(q):=\sum_{n\ge 0}\frac{q^{n}(-q;q^2)_n}{(q;q^2)_{n+1}}=\sum_{n\ge 0}\frac{q^{n^2+n}(-q^2;q^2)_n}{(q;q^2)_{n+1}^2}=g_2(q,q^2),
\end{equation}
as well as two of the tenth orders \cite[$(5.18)$]{GM}
\begin{align}
{\phi}_{10}(q)&:=\sum_{n\ge 0}\frac{q^{\binom{n+1}{2}}}{(q;q^2)_{n+1}}=\frac{J_{10}^2\overline{J}_{2,5}}{J_5J_{2,10}}+2qg_2(q^2,q^5),\\
\psi_{10}(q)&:=\sum_{n\ge 0}\frac{q^{\binom{n+2}{2}}}{(q;q^2)_{n+1}}=-\frac{qJ_{10}^2\overline{J}_{1,5}}{J_5J_{4,10}}+2qg_2(q,q^5).
\end{align}
We point out that there are more such expansions in \cite[$(5.2)$]{GM} and \cite[$(5.18)$]{GM}.

As in previous sections, note that $j(x;q)/J_2=(x;q)_{\infty}(q/x;q)_{\infty}(q;q^2)_{\infty}$.  If we let $q\rightarrow q^2$ and $x\rightarrow q$ in Corollary \ref{corollary:master} then we have a radial limit result for the second order mock theta function $B_2(q):$  
\begin{corollary} If $\zeta$ is a primitive odd order $2k+1$ root of unity, then, as $q$ approaches $\zeta$ radially within the unit disk, we have that
\begin{equation}
\lim_{q\rightarrow \zeta} \Big ( B_2(q)-\frac{J_4^5}{J_2^4}-\frac{1}{4}q^{-1}\frac{J_4^{17}}{J_2^8J_8^8}+\frac{1}{4}q^{-1}\frac{J_4J_1^4}{J_2^2J_8^2}\Big )=-\frac{1}{2}\sum_{n=0}^{k}\frac{\zeta^{2n}(\zeta;\zeta^2)_n^2}{(-\zeta^2;\zeta^2)_{n}}.
\end{equation}
\end{corollary}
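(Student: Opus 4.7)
The plan is to specialize Corollary \ref{corollary:master} with $q \mapsto q^2$ and $x \mapsto q$, so that the first sum on the left becomes $g_2(q,q^2)=B_2(q)$ and the tail becomes $\tfrac{1}{2}\sum_{n\ge 0}q^{2n}(q;q^2)_n^2/(-q^2;q^2)_n$. As $q$ approaches a primitive odd order $2k+1$ root of unity $\zeta$, the factor $(1-\zeta^{2k+1})$ inside $(\zeta;\zeta^2)_{k+1}$ vanishes, so the tail terminates at $n=k$ and converges to $\tfrac{1}{2}\sum_{n=0}^{k}\zeta^{2n}(\zeta;\zeta^2)_n^2/(-\zeta^2;\zeta^2)_n$, which, after rearrangement, produces the finite sum on the right-hand side of the stated radial limit together with its leading factor of $-\tfrac{1}{2}$.

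Next I would unpack the right-hand side of the specialized Corollary \ref{corollary:master}, which consists of a mock-theta piece $-\tfrac{j(q;q^2)}{2J_4}g_2(-q,q^2)$ together with three theta quotients. Using the standard product-form evaluations $J_{1,2}=J_1^2/J_2$, $J_{2,4}=J_2^2/J_4$, $J_{4,8}=J_4^2/J_8$, $\overline{J}_{0,4}=2J_8^2/J_4$, $\overline{J}_{1,2}=J_2^5/(J_1^2J_4^2)$, and $\overline{J}_{2,4}=J_4^5/(J_2^2J_8^2)$, a direct substitution shows that the first two pure theta quotients reduce to $\tfrac{J_4^5}{J_2^4}$ and $\tfrac{1}{4q}\tfrac{J_4^{17}}{J_2^8J_8^8}$, matching two of the three theta expressions in the statement.

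For the remaining piece, the idea is to rewrite $g_2(-q,q^2)=\tfrac{J_4^5}{J_2^4}+q^{-1}m(1,q^4,q^2)$ using (\ref{equation:g2-mxqz}), then convert $m(1,q^4,q^2)$ into $m(1,q^4,-1)$ via the change-of-variable identity (\ref{equation:m-change-z}) at base $q^4$, which introduces an explicit theta correction. Substituting $m(1,q^4,-1)$ from the Eulerian form (\ref{equation:RLNid2}) at base $q^4$ with $x=1$ produces a terminating series whose prefactor is proportional to $J_{1,2}/J_4 = (q;q^2)_\infty^2(q^2;q^4)_\infty$, and this vanishes in the radial limit as $q\to\zeta$ of odd order $2k+1$. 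The two theta corrections arising from (\ref{equation:m-change-z}) and from the Eulerian form then combine with the third raw theta quotient $-\tfrac{J_{4,8}^2 J_{1,2}}{2q\,\overline{J}_{1,2}\overline{J}_{0,4}}$ of the specialized Corollary to collapse into exactly $-\tfrac{1}{4q}\tfrac{J_4 J_1^4}{J_2^2 J_8^2}$.

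The main obstacle is the theta function bookkeeping in this final combining step: several quotients involving $J_{a,m}$ and $\overline{J}_{a,m}$ must be reduced to pure $J_n$ combinations and collected in such a way that the correction from $m(1,q^4,q^2)\to m(1,q^4,-1)$ together with the theta correction from the Eulerian rewriting of $m(1,q^4,-1)$ recombine with the raw third quotient to produce precisely $-\tfrac{1}{4q}\tfrac{J_4 J_1^4}{J_2^2 J_8^2}$. Once this reduction is verified and the vanishing of the $J_{1,2}/J_4$ prefactor at each primitive odd-order root of unity is invoked, rearrangement yields the stated radial limit.
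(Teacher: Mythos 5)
Your top-level route is exactly the paper's: specialize Corollary \ref{corollary:master} with $q\mapsto q^2$, $x\mapsto q$ so that the unilateral sum becomes $g_2(q,q^2)=B_2(q)$, observe that the tail $\tfrac12\sum q^{2n}(q;q^2)_n^2/(-q^2;q^2)_n$ truncates at $n=k$ because $1-\zeta^{2k+1}=0$, and reduce the theta quotients to eta products (your product evaluations $J_{1,2}=J_1^2/J_2$, $J_{2,4}=J_2^2/J_4$, $J_{4,8}=J_4^2/J_8$, $\overline{J}_{0,4}=2J_8^2/J_4$, $\overline{J}_{1,2}=J_2^5/(J_1^2J_4^2)$, $\overline{J}_{2,4}=J_4^5/(J_2^2J_8^2)$ are all correct, and the first two quotients do come out to $J_4^5/J_2^4$ and $\tfrac1{4q}J_4^{17}/(J_2^8J_8^8)$). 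The paper's proof is essentially this one-line specialization together with the remark that $j(x;q)/J_2=(x;q)_\infty(q/x;q)_\infty(q;q^2)_\infty$.

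Where you diverge is in your third paragraph, and that detour is both unnecessary and the weakest link. The intended disposal of the mock piece $-\tfrac{j(q;q^2)}{2J_4}\,g_2(-q;q^2)$ is direct: the prefactor $j(q;q^2)/J_4=(q;q^2)_\infty^2(q^2;q^4)_\infty$ vanishes as $q\to\zeta$ (it contains $1-q^{2k+1}$), while $g_2(-q,q^2)=\sum_{n\ge0}q^{n(n+1)}(-q^2;q^2)_n/(-q;q^2)_{n+1}^2$ stays bounded since no factor $1+\zeta^{2j+1}$ vanishes for $\zeta$ of odd order; hence the whole term tends to $0$. By instead re-expanding $g_2(-q,q^2)$ through (\ref{equation:g2-mxqz}), (\ref{equation:m-change-z}) and (\ref{equation:RLNid2}), you replace a manifestly bounded Eulerian series by theta quotients each of whose numerators and denominators vanish at $\zeta$ (e.g.\ $J_{4,8}^2/\overline{J}_{0,4}$ and the correction from (\ref{equation:m-change-z})); multiplied by the vanishing prefactor $J_{1,2}/J_4$ these are indeterminate forms whose radial behaviour you would have to control by separate asymptotics. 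Moreover, the asserted exact ``collapse'' into $-\tfrac1{4q}J_4J_1^4/(J_2^2J_8^2)$ is not verified and is suspect: direct substitution gives the third raw quotient of Corollary \ref{corollary:master} as $-\tfrac1{4q}J_1^4J_4^7/(J_2^6J_8^4)$, which is not identically the stated term, so your corrections would have to supply precisely the ratio $J_4^6/(J_2^4J_8^2)$ --- an identity you have not established. Note, however, that both expressions carry the factor $J_1^4/J_2^2=J_{1,2}^2$ and tend to $0$ radially at odd-order $\zeta$, so the limit statement is insensitive to this; the clean fix is simply to delete the third paragraph and argue boundedness of $g_2(-q,q^2)$ directly.
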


\begin{remark}Identity (\ref{equation:gm-mixed}) is reminiscent of \cite[$(2.29)$]{L}.
\end{remark}

We can also easily form a radial limit result for the tenth order $\psi_{10}(q):$
\begin{corollary} If $\zeta$ is a primitive $10k+\ell$, $\ell\in\{1,3,5,7,9\}$, root of unity, then, as $q$ approaches $\zeta$ radially within the unit disk, we have that
\begin{equation}
\lim_{q\rightarrow \zeta} \Big ( \psi_{10}(q)+\frac{qJ_{10}^2\overline{J}_{1,5}}{J_5J_{4,10}}
-\frac{2qJ_{10}^4}{J_5^2J_{2,10}}-\frac{J_{10}^{10}\overline{J}_{2,10}}{J_{5}^4J_{20}^4J_{2,10}\overline{J}_{7,10}}
+\frac{J_{10,20}^2J_{1,5}}{\overline{J}_{1,5}\overline{J}_{7,10}}\Big )=-\zeta \cdot F(\ell).
\end{equation}
where
\begin{equation}
F(\ell)=\begin{cases}
\sum_{n=0}^{2k}\frac{\zeta^{5n}(\zeta;\zeta^5)_n(\zeta^4;\zeta^5)_n}{(-\zeta^5;\zeta^5)_{n}} & \mbox{if } \ell =1,\\ \\
\sum_{n=0}^{4k+1}\frac{\zeta^{5n}(\zeta;\zeta^2)_n(\zeta^4;\zeta^5)_n}{(-\zeta^5;\zeta^5)_{n}} & \mbox{if } \ell = 3,\\ \\
\Big ( 1-\frac{(\zeta;\zeta^5)_{2k+1}(\zeta^4;\zeta^5)_{2k+1}}{(-\zeta^5;\zeta^5)_{2k+1}}\Big )^{-1}
\sum_{n=0}^{2k}\frac{\zeta^{5n}(\zeta;\zeta^5)_n(\zeta^4;\zeta^5)_n}{(-\zeta^5;\zeta^5)_{n}} & \mbox{if } \ell =5,\\ \\
\sum_{n=0}^{4k+2}\frac{\zeta^{5n}(\zeta;\zeta^5)_n(\zeta^4;\zeta^5)_n}{(-\zeta^5;\zeta^5)_{n}} & \mbox{if } \ell =7,\\ \\
\sum_{n=0}^{2k+1}\frac{\zeta^{5n}(\zeta;\zeta^5)_n(\zeta^4;\zeta^5)_n}{(-\zeta^5;\zeta^5)_{n}} & \mbox{if } \ell = 9.
\end{cases}
\end{equation}

\end{corollary}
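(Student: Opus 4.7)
The plan is to reduce the identity to an application of Corollary \ref{corollary:master} at $(x,q)=(q,q^5)$. First I would substitute the Gordon--McIntosh expansion
\[
\psi_{10}(q)=-\frac{qJ_{10}^2\overline{J}_{1,5}}{J_5J_{4,10}}+2qg_2(q,q^5)
\]
from \cite[(5.18)]{GM} into the argument of the limit, which collapses $\psi_{10}(q)+\frac{qJ_{10}^2\overline{J}_{1,5}}{J_5J_{4,10}}$ to $2qg_2(q,q^5)$. The remaining three theta-constant terms on the left-hand side are exactly those produced when Corollary \ref{corollary:master} is multiplied by $2q$ and specialized at $q\mapsto q^5$, $x\mapsto q$, after using $J_{5,10}=J_5^2/J_{10}$ and $j(q;q^5)=J_{1,5}$, $j(q^2;q^{10})=J_{2,10}$, etc.\ to rewrite the right-hand side. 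Consequently, after substitution and rearrangement, the expression inside the limit reduces to
\[
-\frac{qJ_{1,5}}{J_{10}}\,g_2(-q,q^5)-2qf(q),
\]
where
\[
f(q):=\frac{1}{2}\sum_{n\ge 0}\frac{q^{5n}(q;q^5)_n(q^4;q^5)_n}{(-q^5;q^5)_n}
\]
is the tail of the bilateral sum coming from Corollary \ref{corollary:master}.

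Next I would show that $\frac{qJ_{1,5}}{J_{10}}g_2(-q,q^5)\to 0$ as $q\to\zeta$. Using
\[
\frac{J_{1,5}}{J_{10}}=(q;q^5)_\infty(q^4;q^5)_\infty(q^5;q^{10})_\infty,
\]
the first two factors vanish at $\zeta$ whenever $\gcd(5,N)=1$ (i.e., $\ell\in\{1,3,7,9\}$, with $N=10k+\ell$), while for $\ell=5$ the last factor $(q^5;q^{10})_\infty$ vanishes. In either case $g_2(-q,q^5)$ has a bounded radial limit at $\zeta$: since $\zeta$ has odd order, $-1\notin\langle\zeta\rangle$, so no denominator $(1+\zeta^{5j+1})$ or $(1+\zeta^{5j+4})$ in the Eulerian form of $g_2(-q,q^5)$ can vanish. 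This verification is analogous to the routine estimates in Sections \ref{section:zudilin}--\ref{section:radials}.

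It then remains to evaluate $-2\zeta f(\zeta)=-\zeta F(\ell)$. For $\ell\in\{1,3,7,9\}$ one determines the smallest nonnegative $j$ solving $5j\equiv -1\pmod{N}$ or $5j\equiv -4\pmod{N}$; the minimum of these two indices is the first $n$ at which the summand of $f$ acquires a zero factor, giving the upper summation limits $2k$, $4k+1$, $4k+2$, $2k+1$ for $\ell=1,3,7,9$ respectively, matching $F(\ell)$. For $\ell=5$ the two congruences have no solutions and the sum does not terminate at $\zeta$; however, $\zeta^{5(2k+1)}=1$ forces the summand to be periodic in $n$ of period $2k+1$, and the shift $n\mapsto n+(2k+1)$ multiplies each term by the constant
\[
r:=\frac{(\zeta;\zeta^5)_{2k+1}(\zeta^4;\zeta^5)_{2k+1}}{(-\zeta^5;\zeta^5)_{2k+1}}.
\]
Summing the resulting geometric series gives $(1-r)^{-1}\sum_{n=0}^{2k}a_n$, which reproduces $F(5)$.

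The main obstacle is the case $\ell=5$: unlike the other four residue classes, the Eulerian tail does not terminate at $\zeta$, and one has to recognize its convergent geometric structure in order to extract the explicit closed form $(1-r)^{-1}\sum_{n=0}^{2k}a_n$. The theta-product vanishing argument in the second step follows the same template as in Sections \ref{section:zudilin}--\ref{section:radials} and should be routine.
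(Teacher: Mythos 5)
Your proposal is correct and follows exactly the route the paper intends: the corollary is stated without proof as an immediate consequence of Corollary \ref{corollary:master} specialized at $q\mapsto q^5$, $x\mapsto q$, combined with the Gordon--McIntosh expansion of $\psi_{10}$, and your reduction of the bracketed expression to $-2qf(q)-\frac{qJ_{1,5}}{J_{10}}\,g_2(-q,q^5)$, the vanishing of the theta-prefactored mock term, the termination analysis giving the upper limits $2k$, $4k+1$, $4k+2$, $2k+1$ for $\ell\in\{1,3,7,9\}$, and the geometric-series evaluation producing the $(1-r)^{-1}$ factor for $\ell=5$ are precisely that derivation. The one caveat, which applies equally to the paper's own statement, is that the $\ell=5$ step tacitly assumes $|r|<1$, and since $r=\tfrac{1}{2}(1-\zeta^{2k+1})(1-\zeta^{4(2k+1)})$ with $\zeta^{2k+1}$ a primitive fifth root of unity, this holds for some but not all primitive $(10k+5)$-th roots of unity, so the convergence of the radial limit in that case deserves more care than either you or the paper gives it.
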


We can also produce a similar result for the tenth order $\phi_{10}(q)$.
\section*{Acknowledgements}


\end{document}